\newtheorem{theorem}{Theorem}
\theoremstyle{plain}
\newtheorem{acknowledgement}{Acknowledgement}
\newtheorem{definition}{Definition}
\newtheorem{lemma}{Lemma}
\newtheorem{proposition}{Proposition}
\newtheorem{remark}{Remark}
\numberwithin{equation}{section}
\begin{document}
\title[A characterization of the hyperbolic space]{Harmonic functions, entropy, and a characterization of the hyperbolic space}
\author{Xiaodong Wang}
\address{Department of Mathematics\\
Michigan State University\\
East Lansing, MI\ 48824}
\email{xwang@math.msu.edu}
\thanks{Partially supported by NSF Grant 0505645.}

\begin{abstract}
Let $(M^{n},g)$ be a compact Riemannian manifold with $Ric\geq-\left(
n-1\right)  $. It is well known that the bottom of spectrum $\lambda_{0}$ of
its unverversal covering satisfies $\lambda_{0}\leq\left(  n-1\right)  ^{2}/4
$. We prove that equality holds iff $M$ is hyperbolic. This follows from a
sharp estimate for the Kaimanovich entropy.

\end{abstract}
\subjclass{53C24, 31C05, 58J50}
\keywords{harmonic functions, entropy, \ $L^{2}$ spectrum}
\date{}
\maketitle

\section{Introduction}

Complete Riemannian manifolds with nonnegative Ricci curvature have been
intensively studied by many people and there are various methods and many
beautiful results (see e.g., the book \cite{P}). One of the most important
theorems on such manifolds is the following Cheeger-Gromoll splitting theorem:

\begin{theorem}
(Cheeger-Gromoll) If $\left(  N,g\right)  $ contains a line and has $Ric\geq
0$, then $\left(  N,g\right)  $ is isometric to a product $\left(
%TCIMACRO{\U{211d} }%
%BeginExpansion
\mathbb{R}
%EndExpansion
\times\Sigma,dt^{2}+h\right)  $.
\end{theorem}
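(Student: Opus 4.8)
The plan is to build the splitting directly out of the \emph{Busemann functions} attached to the two ends of the line. Write $\gamma:\mathbb{R}\to N$ for the line and split it into the forward ray $\gamma^{+}(t)=\gamma(t)$ and the backward ray $\gamma^{-}(t)=\gamma(-t)$, $t\geq0$. For a ray $\sigma$ define its Busemann function
\[
b_{\sigma}(x)=\lim_{t\to\infty}\bigl(t-d(x,\sigma(t))\bigr),
\]
the limit existing because $t\mapsto t-d(x,\sigma(t))$ is nondecreasing and bounded above by $d(x,\sigma(0))$; each $b_{\sigma}$ is $1$-Lipschitz. Set $b^{+}=b_{\gamma^{+}}$ and $b^{-}=b_{\gamma^{-}}$.

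First I would record the two elementary facts that drive everything. By the triangle inequality along the line one has $b^{+}(x)+b^{-}(x)\leq0$ for all $x$, while along the line itself $b^{+}(\gamma(\tau))=\tau$ and $b^{-}(\gamma(\tau))=-\tau$, so $b^{+}+b^{-}$ vanishes identically on $\gamma$. The curvature hypothesis enters through the Laplacian comparison theorem: under $Ric\geq0$ one has $\Delta\,d(\cdot,q)\leq(n-1)/d(\cdot,q)$ away from the cut locus, and since the reference point $q=\sigma(t)$ recedes to infinity the right-hand side tends to $0$. Carrying this through the limit shows that each $b^{\pm}$ is subharmonic in the \emph{barrier} (support-function) sense. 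Hence $f:=b^{+}+b^{-}$ is a barrier-subharmonic function that is $\leq0$ everywhere and attains its maximum value $0$ along $\gamma$.

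The decisive step is to promote this to $f\equiv0$. Because $f$ is only Lipschitz the ordinary strong maximum principle does not apply directly; I would instead use Calabi's trick of smooth lower support functions (equivalently the Eschenburg--Heintze generalized maximum principle): at an interior maximum one produces smooth lower barriers with arbitrarily small Laplacian, and E.\ Hopf's strong maximum principle applied to these barriers forces $f$ to be locally constant, hence $f\equiv0$ on the connected manifold $N$. The identity $b^{-}=-b^{+}$ then means $b^{+}$ is simultaneously barrier-subharmonic and barrier-superharmonic, so it is a weak solution of $\Delta b^{+}=0$; elliptic regularity upgrades $b^{+}$ to a smooth harmonic function. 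I expect this regularity/maximum-principle step to be the main obstacle, since it is exactly where the weak (Lipschitz) nature of Busemann functions must be reconciled with the smooth machinery.

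With $b:=b^{+}$ now smooth and harmonic, the rest is a Bochner computation. Since $b$ is $1$-Lipschitz we have $|\nabla b|\leq1$, with equality along $\gamma$. The Bochner formula
\[
\tfrac{1}{2}\Delta|\nabla b|^{2}=|\nabla^{2}b|^{2}+\langle\nabla b,\nabla(\Delta b)\rangle+Ric(\nabla b,\nabla b)
\]
reduces, using $\Delta b=0$ and $Ric\geq0$, to $\tfrac{1}{2}\Delta|\nabla b|^{2}\geq|\nabla^{2}b|^{2}\geq0$, so $|\nabla b|^{2}$ is subharmonic; as it attains its maximum value $1$ in the interior, the maximum principle gives $|\nabla b|\equiv1$. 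Feeding this back, every inequality above is an equality, forcing $\nabla^{2}b\equiv0$. Thus $\nabla b$ is a parallel unit vector field, and flowing along it identifies $N$ isometrically with $\bigl(\mathbb{R}\times\Sigma,\,dt^{2}+h\bigr)$, where $\Sigma=b^{-1}(0)$ and $h$ is the induced metric. This is the desired splitting.
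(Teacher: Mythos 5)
The paper offers no proof of this statement at all: it is quoted purely as classical background (the Cheeger--Gromoll splitting theorem, with the reader pointed to the literature), so there is no internal argument to compare against. Your proposal is the standard modern proof --- essentially Eschenburg--Heintze's simplification built on Calabi's barrier maximum principle --- and it is correct in outline: Busemann functions $b^{\pm}$ of the two rays, the inequality $b^{+}+b^{-}\leq 0$ with equality on the line, barrier subharmonicity from Laplacian comparison, Calabi's strong maximum principle to force $b^{+}+b^{-}\equiv 0$, harmonicity and smoothness of $b^{+}$ by elliptic regularity, and the Bochner formula to force $|\nabla b^{+}|\equiv 1$ and $\nabla^{2}b^{+}\equiv 0$, whence the flow of the parallel field $\nabla b^{+}$ gives the isometric splitting. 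The one step you correctly flag but do not execute is the passage from $\Delta\bigl(t-d(\cdot,\sigma(t))\bigr)\geq -(n-1)/d(\cdot,\sigma(t))$ to barrier subharmonicity of the limit $b_{\sigma}$: the exhausting functions $b_{t}(y)=t-d(y,\sigma(t))$ lie below $b_{\sigma}$ but do not touch it at a given point $p$, so they are not themselves support functions there, and ``carrying this through the limit'' needs the asymptote construction. Namely, one takes a ray $\tilde{\sigma}$ from $p$ asymptotic to $\sigma$ (a limit of minimizing segments from $p$ to $\sigma(t_{i})$); then
\[
h_{t}(y)=b_{\sigma}(p)+t-d\bigl(y,\tilde{\sigma}(t)\bigr)
\]
is smooth near $p$ (as $p$ is an interior point of a minimizing segment, hence not in the cut locus of $\tilde{\sigma}(t)$), touches $b_{\sigma}$ from below at $p$ by the co-ray identity $b_{\sigma}(\tilde{\sigma}(t))=b_{\sigma}(p)+t$ combined with the $1$-Lipschitz bound, and satisfies $\Delta h_{t}\geq -(n-1)/t$ at $p$, which is exactly what Calabi's principle requires. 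With that detail supplied, the remainder of your argument --- including the attainment of $|\nabla b^{+}|=1$ along $\gamma$ from $b^{+}(\gamma(\tau))=\tau$, and the equality analysis in Bochner --- is complete and agrees with the proof found in the standard references the paper implicitly relies on.
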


This theorem has the following important corollaries on the structure of
manifolds with nonnegative Ricci curvature:

\begin{itemize}
\item A complete Riemannian $\left(  N,g\right)  $ with $Ric\geq0$ either has
only one end or is isometric to a product $\left(
%TCIMACRO{\U{211d} }%
%BeginExpansion
\mathbb{R}
%EndExpansion
\times\Sigma,dt^{2}+g_{\Sigma}\right)  $, with $\Sigma$ compact.

\item If $\left(  M^{n},g\right)  $ is compact with $Ric\geq0$ then its
universal covering $\widetilde{M}$ splits isometrically as a product $%
%TCIMACRO{\U{211d} }%
%BeginExpansion
\mathbb{R}
%EndExpansion
^{k}\times\Sigma^{n-k}$, where $\Sigma$ is a simply connected compact manifold
with $Ric\geq0$. If furthermore $\widetilde{M}$ has Euclidean volume growth,
then $\widetilde{M}$ is isometric to $%
%TCIMACRO{\U{211d} }%
%BeginExpansion
\mathbb{R}
%EndExpansion
^{n}$.
\end{itemize}

Riemannian manifolds with a negative lower bound for Ricci curvature are
considerably more complicated and less understood. It is naive to expect such
splitting results in general. Nevertheless there have been very interesting
results due to Li and J. Wang recently. It has been discovered that the bottom
of the $L^{2}$ spectrum plays an important role (see also the earlier work
\cite{W} in the conformally compact case). Let us assume that $\left(
N^{n},g\right)  $ is a complete Riemannian manifold with $Ric\geq-\left(
n-1\right)  $. The bottom of the $L^{2}$ spectrum of the Laplace operator on
functions is denoted by $\lambda_{0}\left(  N\right)  $ and can be
characterized as%
\[
\lambda_{0}\left(  N\right)  =\inf\frac{\int_{N}\left\vert \nabla u\right\vert
^{2}}{\int_{N}u^{2}},
\]
where the infimum is taken over all smooth functions with compact support. It
is well known $\lambda_{0}\left(  N\right)  \leq\left(  n-1\right)  ^{2}/4$.
Li-Wang proved the following theorems for manifolds with positive $\lambda
_{0}$.

\begin{theorem}
(Li-Wang) Let $\left(  N^{n},g\right)  $ be a complete Riemannian manifold
with $Ric\geq-\left(  n-1\right)  $ and $\lambda_{0}\left(  N\right)  \geq
n-2$. Then either

\begin{enumerate}
\item $N$ has only one end with infinite volume; or

\item $N$ $=%
%TCIMACRO{\U{211d} }%
%BeginExpansion
\mathbb{R}
%EndExpansion
\times\Sigma$ with warped product metric $g=dt^{2}+\cosh^{2}tg_{\Sigma}$,
where $\left(  \Sigma,g_{\Sigma}\right)  $ is compact with $Ric$ $\geq-\left(
n-2\right)  $.
\end{enumerate}
\end{theorem}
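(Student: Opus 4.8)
The plan is to argue by contraposition on the end structure of $N$, using the Li--Tam theory of harmonic functions, with the two hypotheses entering through a sharp Bochner estimate whose equality case produces the warped product. Assume $N$ is not of type (1), so that $N$ either has at least two ends or possesses an end of finite volume. Since $\lambda_{0}(N)>0$, every end is parabolic or nonparabolic and positivity of the spectrum supplies exponential decay estimates controlling harmonic functions near infinity. When $N$ has two nonparabolic ends I would invoke the standard construction of a nonconstant bounded harmonic function $u$ with finite Dirichlet energy $\int_{N}|\nabla u|^{2}<\infty$, obtained as a limit of solutions of boundary value problems on an exhaustion and normalized to approach distinct constants on the two ends; the remaining configurations (a nonparabolic end together with a parabolic or finite-volume end) are treated by the parallel construction of a proper harmonic function with controlled growth. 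The aim is to show that the mere existence of such a $u$, under $\mathrm{Ric}\geq-(n-1)$ and $\lambda_{0}\geq n-2$, is only possible for the stated warped product, which in particular rules out a finite-volume end.

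The analytic core is a Bochner estimate for $f=|\nabla u|$. For a harmonic function the Bochner formula and the refined Kato inequality $|\nabla^{2}u|^{2}\geq\frac{n}{n-1}|\nabla f|^{2}$ give, weakly,
\[
f\,\Delta f \;\geq\; \frac{1}{n-1}\,|\nabla f|^{2} + \mathrm{Ric}(\nabla u,\nabla u) \;\geq\; \frac{1}{n-1}\,|\nabla f|^{2} - (n-1)\,f^{2}.
\]
The decisive move is to set $w=f^{(n-2)/(n-1)}$, where the exponent $(n-2)/(n-1)$ is chosen precisely so that the first-order terms cancel; a direct computation then converts the inequality into $\Delta w\geq-(n-2)w$. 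This matches the spectral hypothesis exactly: testing $\Delta w+(n-2)w\geq0$ against $\phi^{2}w$, integrating by parts, and comparing with the variational inequality $\lambda_{0}\int(\phi w)^{2}\leq\int|\nabla(\phi w)|^{2}$ gives, after letting $\phi\to1$,
\[
\int_{N}|\nabla w|^{2} \;\leq\; (n-2)\int_{N}w^{2} \;\leq\; \lambda_{0}\int_{N}w^{2} \;\leq\; \int_{N}|\nabla w|^{2}.
\]
Hence every inequality is an equality; the decay coming from $\lambda_{0}>0$ is what justifies discarding the cutoff error terms in the limit. In particular $\lambda_{0}=n-2$, the function $w$ realizes the bottom of the spectrum, $\Delta w=-(n-2)w$, and the refined Kato and Ricci inequalities are saturated pointwise.

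Tracking these equalities is the geometric heart. Saturation of the refined Kato inequality forces the Hessian of $u$ into the rank-one-plus-conformal form characteristic of a distance-type function, saturation of $\mathrm{Ric}(\nabla u,\nabla u)\geq-(n-1)f^{2}$ forces the Ricci curvature to attain its lower bound in the direction $\nabla u$, and the equalities in the integrations by parts force $f$ to be constant along the level sets of $u$. Taking $t$ to be signed arclength along the unit field $\nabla u/|\nabla u|$, these conditions foliate $N$ as $\mathbb{R}\times\Sigma$ with the level sets of $u$ as the $\Sigma$-factors; the second fundamental form of the level sets obeys a Riccati equation whose equality case, dictated by $\mathrm{Ric}(\partial_{t},\partial_{t})=-(n-1)$, reads $\varphi''=\varphi$ for the warping function and integrates, with the even initial data forced by the two-sided picture, to $\varphi(t)=\cosh t$. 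Compactness of $\Sigma$ follows because the level sets separating two ends are compact and finiteness of $\int_{N}|\nabla u|^{2}$ excludes a noncompact cross-section, and the warped-product Ricci identities turn $\mathrm{Ric}^{N}\geq-(n-1)$ into $\mathrm{Ric}_{\Sigma}\geq-(n-2)$, giving conclusion (2).

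I expect the genuine obstacle to be exactly this equality/rigidity analysis together with the regularity needed to globalize it. One must establish the decay estimates carefully enough to know $\int_{N}w^{2}$ and $\int_{N}|\nabla w|^{2}$ are finite and that the passage $\phi\to1$ leaves no surviving boundary term, control the critical set $\{f=0\}$ so that the pointwise equalities propagate across all of $N$ rather than on an open dense set only, and upgrade the infinitesimal splitting from the saturated Bochner identity to a genuine isometric warped product. Pinning down that the compact cross-section and the exact profile $\cosh^{2}t$ emerge---rather than some other warping compatible with the inequalities---is where the threshold hypothesis $\lambda_{0}\geq n-2$ is indispensable, and is the crux of the argument.
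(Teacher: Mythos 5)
The paper itself contains no proof of this statement: it is quoted as background from Li--Wang \cite{LW1}, so your proposal can only be compared with the original argument there. For the main case your outline is essentially Li--Wang's actual proof: when $\lambda_{0}>0$ every infinite-volume end is nonparabolic, two such ends yield via Li--Tam theory a nonconstant bounded harmonic function $u$ with finite Dirichlet integral, the Bochner formula plus the refined Kato inequality give $f\Delta f\geq\frac{1}{n-1}\left\vert \nabla f\right\vert ^{2}-\left(  n-1\right)  f^{2}$ for $f=\left\vert \nabla u\right\vert$, the substitution $w=f^{\left(  n-2\right)  /\left(  n-1\right)  }$ is exactly the right one to get $\Delta w\geq-\left(  n-2\right)  w$, and the cutoff comparison with the variational characterization of $\lambda_{0}$ forces equality everywhere, whose rigidity analysis produces the warped product $dt^{2}+\cosh^{2}t\,g_{\Sigma}$ with compact fibre and $Ric_{\Sigma}\geq-\left(  n-2\right)$. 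That part of the plan is sound and is the route taken in \cite{LW1}, with the technical points you flag (decay estimates justifying $\phi\rightarrow1$, the weak formulation across $\left\{  f=0\right\}$) handled there.

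There is, however, a genuine error in your case analysis. The negation of alternative (1) is ``at least two ends with infinite volume,'' not ``at least two ends or an end of finite volume'': conclusion (1) explicitly permits finite-volume ends, and your announced goal of ruling out a finite-volume end is not only unnecessary but false under the stated hypotheses. Take $n=3$, so the hypothesis reads $\lambda_{0}\geq1$, and consider the hyperbolic cusp $N=\mathbb{R}\times T^{2}$ with $g=dt^{2}+e^{2t}g_{T^{2}}$, $g_{T^{2}}$ flat: it satisfies $Ric\equiv-2=-\left(  n-1\right)$ and $\lambda_{0}=1=n-2$, has one infinite-volume end and one finite-volume end, and is certainly not the $\cosh$ warped product --- it falls under case (1). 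This is precisely why Li--Wang phrase (1) as ``only one end with infinite volume,'' and why parabolic (finite-volume) ends are only handled under the stronger hypothesis $\lambda_{0}=\left(  n-1\right)  ^{2}/4$ in the sequel (Theorem \ref{LW2} above, from \cite{LW2}), where the conclusion is the different warped product $dt^{2}+e^{2t}g_{\Sigma}$. Your proposed ``parallel construction of a proper harmonic function'' for the parabolic configuration therefore cannot close the argument and no argument can: restrict the contrapositive to two nonparabolic ends and delete the claim that finite-volume ends are excluded, and your sketch then matches the literature proof.
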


When $\lambda_{0}\left(  N\right)  =\left(  n-1\right)  ^{2}/4$, they can also
handle ends of finite volume.

\begin{theorem}
\label{LW2}(Li-Wang) Let $\left(  N^{n},g\right)  $ be a complete Riemannian
manifold with $Ric\geq-\left(  n-1\right)  $ and $\lambda_{0}\left(  N\right)
=\left(  n-1\right)  ^{2}/4$. Then either

\begin{enumerate}
\item $N$ has only one end; or

\item $N$ $=%
%TCIMACRO{\U{211d} }%
%BeginExpansion
\mathbb{R}
%EndExpansion
\times\Sigma$ with warped product metric $g=dt^{2}+e^{2t}g_{\Sigma}$, where
$\left(  \Sigma,h\right)  $ is compact manifold with nonnegative Ricci curvature;

\item $n=3$ and $N=%
%TCIMACRO{\U{211d} }%
%BeginExpansion
\mathbb{R}
%EndExpansion
\times\Sigma$ with warped product metric $g=dt^{2}+\cosh^{2}tg_{\Sigma}$,
where $\left(  \Sigma,h\right)  $ is compact surface with Gaussian curvature
$\geq-1$.
\end{enumerate}
\end{theorem}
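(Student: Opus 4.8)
The plan is to split the argument according to whether $N$ has an end of finite volume. The guiding elementary fact is that $\lambda_{0}(N)=(n-1)^{2}/4\geq n-2$, since this inequality is equivalent to $(n-3)^{2}\geq0$ and is an equality exactly when $n=3$. Thus, as long as every end of $N$ has infinite volume, the first Li--Wang theorem already applies and yields either a single end or a splitting $N=\mathbb{R}\times\Sigma$ with $g=dt^{2}+\cosh^{2}t\,g_{\Sigma}$ and $Ric_{\Sigma}\geq-(n-2)$. The two points requiring genuinely new work are then: (i) showing that the sharp value $\lambda_{0}=(n-1)^{2}/4$ forces $n=3$ in this $\cosh^{2}t$ alternative; and (ii) treating ends of finite volume, which the first theorem leaves untouched and which should produce the remaining warped product $dt^{2}+e^{2t}g_{\Sigma}$.

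For (i) I would compute $\lambda_{0}$ of $dt^{2}+\cosh^{2}t\,g_{\Sigma}$ directly. With volume element $\cosh^{n-1}t\,dt\,dV_{\Sigma}$, the radial part of the Laplacian is $L=\partial_{t}^{2}+(n-1)\tanh t\,\partial_{t}$, and the ansatz $u=\cosh^{-\beta}t$ gives
\[
Lu=-\beta\bigl[1+(n-2-\beta)\tanh^{2}t\bigr]\cosh^{-\beta}t .
\]
Hence $u$ is an eigenfunction precisely when $\beta=n-2$, with eigenvalue $n-2$, and a direct integration shows $u\in L^{2}(N)$ if and only if $n>3$. For $n>3$ this produces an $L^{2}$ ground state and forces $\lambda_{0}\leq n-2<(n-1)^{2}/4$, contradicting the hypothesis; therefore $n=3$, where $u=\cosh^{-1}t\notin L^{2}$ and nothing dips below $(n-1)^{2}/4$. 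With $n=3$ the fiber is a surface and the condition $Ric_{\Sigma}\geq-(n-2)=-1$ is exactly $K_{\Sigma}\geq-1$, giving case (3).

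For (ii), suppose $N$ has an end $E$ of finite volume. Using $\lambda_{0}>0$ I would first construct a bounded harmonic function $f$, normalized to $0\leq f\leq1$, with distinct limiting values along $E$ and along the rest of $N$; obtaining good control of such an $f$ at a finite-volume end is already where the sharp value, rather than the weaker bound $\lambda_{0}\geq n-2$, becomes essential. Setting $h=|\nabla f|$ and combining the Bochner formula with the refined Kato inequality $|\nabla^{2}f|^{2}\geq\frac{n}{n-1}|\nabla h|^{2}$ for the harmonic one--form $df$ yields the differential inequality $h\,\Delta h\geq\frac{1}{n-1}|\nabla h|^{2}-(n-1)h^{2}$. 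The heart of the matter is to integrate this against carefully chosen weights and to play it off against the variational characterization $\frac{(n-1)^{2}}{4}\int\psi^{2}\leq\int|\nabla\psi|^{2}$; the sharp decay $e^{-(n-1)t/2}$ imposed by $\lambda_{0}=(n-1)^{2}/4$, together with the finiteness of $\mathrm{vol}(E)$, should let all boundary terms be discarded and collapse every inequality to an equality.

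The rigidity then reads off the equality case: one gets $Ric(\nabla f,\nabla f)=-(n-1)h^{2}$ and, from equality in Kato, a pointwise-determined Hessian $\nabla^{2}f$ making the level sets $\{f=c\}$ totally umbilic. This forces $h$ to be a function of $f$ alone; reparametrizing by arclength exhibits $N=\mathbb{R}\times\Sigma$ with $g=dt^{2}+\varphi(t)^{2}g_{\Sigma}$ and $\varphi''=\varphi$, the cusp selecting $\varphi=e^{t}$, while the warped-product Ricci identities turn $Ric_{N}\geq-(n-1)$ into $Ric_{\Sigma}\geq0$; this is case (2). I expect the principal obstacle to be precisely step (ii): constructing the separating harmonic function at a finite-volume end and controlling the boundary terms in the weighted integration by parts, since that is where the sharp value $\lambda_{0}=(n-1)^{2}/4$ --- as opposed to $\lambda_{0}\geq n-2$ --- must be used in full strength.
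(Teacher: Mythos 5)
This theorem is quoted in the paper from Li--Wang \cite{LW2} without proof, so there is no internal argument to compare against; your proposal must be judged on its own as a reconstruction of Li and Wang's proof. Your step (i) is correct and essentially complete: $(n-1)^2/4\geq n-2$ with equality exactly at $n=3$, and your radial computation on $dt^2+\cosh^2 t\,g_{\Sigma}$ is right --- $u=\cosh^{2-n}t$ satisfies $\Delta u=-(n-2)u$, and since the volume element carries $\cosh^{n-1}t$ one gets $\int u^2\sim\int\cosh^{3-n}t\,dt<\infty$ iff $n>3$, so for $n>3$ essential self-adjointness of $\Delta$ on the complete manifold makes $u$ a genuine $L^2$ eigenfunction and forces $\lambda_0\leq n-2<(n-1)^2/4$, a contradiction; for $n=3$ the fiber is a surface and $Ric_{\Sigma}\geq-1$ reads $K_{\Sigma}\geq-1$. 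This is in substance how the $\cosh$ alternative gets pinned to dimension $3$.

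Step (ii), which you yourself flag as the hard part, contains a genuine gap and opens with a misstep. An end of finite volume is parabolic, and when one of the two ends is parabolic there is no nonconstant \emph{bounded} harmonic function of the kind you propose: the Li--Tam construction adapted to a parabolic/nonparabolic pair of ends produces a positive harmonic function that is bounded on the nonparabolic end but proper, tending to $+\infty$, along the parabolic end --- and that properness is exactly what later furnishes the $t$-coordinate of the cusp $dt^2+e^{2t}g_{\Sigma}$. So the normalization $0\leq f\leq1$ cannot be achieved and the argument as written would not start. Beyond that, the analytic heart is only asserted: the weighted $L^2$ decay estimates forced by $\lambda_0=(n-1)^2/4$ (sharp exponential decay of $f$ or of $\int e^{(n-1)t}f^2$ over the end), and the verification that all boundary terms vanish when integrating $h\,\Delta h\geq\frac{1}{n-1}\vert\nabla h\vert^2-(n-1)h^2$ against the exponential weights, are precisely where the full strength of the sharp value enters, and you say only that these ``should'' work. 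Your equality-case endgame (umbilic level sets, $\varphi''=\varphi$ with the cusp selecting $\varphi=e^t$, $Ric_{\Sigma}\geq0$) is the right rigidity reading, but you also omit why $\Sigma$ is \emph{compact} --- that comes from the finiteness of the volume of the end. In short: (i) is a correct, self-contained reduction; (ii) correctly locates the difficulty but does not supply it, and its construction of the separating harmonic function is wrong as stated.
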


The basic point is that $\lambda_{0}\left(  N\right)  $ is sensitive to the
connectedness at infinity. In both cases if there are two ends they are able
to prove that the manifold splits as a warped product. Since the splitting is
only obtained under this restrictive situation, their theorems are not as
powerful as the Cheeger-Gromoll splitting theorem is for manifolds with
nonnegative Ricci curvature. An intriguing question is if there is a more
general mechanism under which a complete $\left(  N^{n},g\right)  $ with
$Ric\geq-\left(  n-1\right)  $ and $\lambda_{0}\left(  N\right)  =\left(
n-1\right)  ^{2}/4$ splits as a warped product.

We will consider a special situation inspired by an aforementioned corollary
of the Cheeger-Gromoll theorem. Suppose $\widetilde{M}$ is the universal
covering of a compact Riemannian manifold $\left(  M^{n},g\right)  $ with
$Ric\geq-\left(  n-1\right)  $. What happens if $\lambda_{0}\left(
\widetilde{M}\right)  =\left(  n-1\right)  ^{2}/4$, the largest possible
value? From Theorem \ref{LW2} it seems the only information we can draw is
that $\widetilde{M}$ has only one end. On the other hand it is reasonable to
expect that $\widetilde{M}$ is isometric to $\mathbb{H}^{n}$, i.e. $\left(
M^{n},g\right)  $ is a hyperbolic manifold. In fact this has been conjectured
by Jiaping Wang.

The main result of this paper is an affirmative answer to this conjecture,
i.e. we prove

\begin{theorem}
[Main Theorem]\label{main}Let $\left(  M^{n},g\right)  $ be a compact
Riemannian manifold with $Ric\geq-\left(  n-1\right)  $ and $\pi:\widetilde
{M}\rightarrow M$ its universal covering. If $\lambda_{0}\left(  \widetilde
{M}\right)  =\left(  n-1\right)  ^{2}/4$, then $\widetilde{M}$ is isometric to
the hyperbolic space $\mathbb{H}^{n}$.
\end{theorem}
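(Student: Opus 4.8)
The plan is to read the rigidity off three asymptotic invariants of Brownian motion on $\widetilde{M}$, all of which are well defined and finite precisely because $M$ is compact (so the deck group acts cocompactly): the volume entropy $v=\lim_{r\to\infty}r^{-1}\log\mathrm{vol}\,B(o,r)$, the linear drift $\ell=\lim_{t\to\infty}t^{-1}d(o,X_{t})$ (an almost sure and $L^{1}$ limit), and the Kaimanovich entropy $h$ of the trajectory space. First I would collect the classical one-sided bounds. Bishop--Gromov under $Ric\ge-(n-1)$ gives $v\le n-1$; a test-function argument with $e^{-sr}$, $s>v/2$, gives $\lambda_{0}\le v^{2}/4$; Laplacian comparison $\Delta r\le(n-1)\coth r$ together with $r(X_{t})\to\infty$ gives the drift bound $\ell\le n-1$; and Guivarc'h's fundamental inequality gives $h\le v\,\ell$. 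Already $\lambda_{0}\le v^{2}/4\le(n-1)^{2}/4$, so the hypothesis forces $v=n-1$ and $\lambda_{0}=v^{2}/4$ --- but this by itself (a single-scale equality of exponential rates) is far from rigidity, which is exactly why the entropy is needed.

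The heart of the argument is a \emph{sharp lower bound} for the entropy, $h\ge\bigl(\sqrt{\lambda_{0}}+\tfrac{1}{2}\ell\bigr)^{2}$. I would prove it from a sharp heat-kernel upper bound of the form $\tilde p(t,o,y)\le C_{\varepsilon}\exp\!\bigl(-\lambda_{0}t-\sqrt{\lambda_{0}}\,d(o,y)-\tfrac{d(o,y)^{2}}{(4+\varepsilon)t}\bigr)$. Here the exponential rate $\lambda_{0}$ in $t$ is spectral; the spatial rate $\sqrt{\lambda_{0}}$ is the decay rate of the minimal positive solution of $\Delta u+\lambda_{0}u=0$ (on $\mathbb{H}^{n}$ this ground state is exactly $e^{-\frac{n-1}{2}r}$, and $\tfrac{n-1}{2}=\sqrt{\lambda_{0}}$); and the last factor is the standard on-diagonal-to-off-diagonal Gaussian comparison. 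Writing $h=-\lim_{t}t^{-1}\int\tilde p\log\tilde p$ and inserting this bound, the three terms contribute $\lambda_{0}$, $\sqrt{\lambda_{0}}\,\ell$, and $\tfrac{1}{4}\ell^{2}$ respectively, using $d(o,X_{t})/t\to\ell$ and concentration of $d(o,X_{t})^{2}/t^{2}$ to $\ell^{2}$.

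Combining the sharp lower bound with $h\le v\ell$ gives $\sqrt{\lambda_{0}}\le\sqrt{v\ell}-\tfrac{1}{2}\ell\le\tfrac{1}{2}v$, i.e. again $\lambda_{0}\le v^{2}/4$, the last step being AM--GM with equality iff $\ell=v$. The extremal hypothesis $\lambda_{0}=(n-1)^{2}/4$ therefore forces every inequality in this chain to be an equality: $\ell=v=n-1$, $h=(n-1)^{2}=4\lambda_{0}$, Guivarc'h's inequality is saturated, and the heat-kernel bound above is saturated along the motion. The point is that the entropy inequality supplies the extra equality $\ell=n-1$ (maximal drift), which the spectral and volume comparisons alone cannot detect.

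It remains to upgrade these equalities to the metric conclusion. Starting from $\ell=n-1$ and the identity expressing the drift as the asymptotic mean of the radial Laplacian $\Delta r$ along the motion, the nonnegative defect $(n-1)\coth r-\Delta r\ge 0$ has vanishing asymptotic average; equivalently the Busemann functions $b_{\xi}=\lim\bigl(d(\cdot,p)-d(o,p)\bigr)$ satisfy $|\nabla b_{\xi}|=1$ and $\Delta b_{\xi}=n-1$ rather than merely $\le n-1$. I would then push the Hessian comparison to saturation, $\mathrm{Hess}\,b_{\xi}=g-db_{\xi}\otimes db_{\xi}$, so that the horospheres are totally umbilic with principal curvatures $1$ and $\widetilde{M}$ is a warped product $\mathbb{R}\times_{e^{t}}\Sigma$; saturation of $Ric\ge-(n-1)$ then forces $\Sigma$ to be flat, giving $\widetilde{M}\cong\mathbb{H}^{n}$. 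I expect the main obstacle to be precisely this last upgrade --- converting the \emph{averaged, asymptotic} sharpness produced by the entropy identity into \emph{pointwise} rigidity, with only a Ricci lower bound (no upper curvature bound, hence no a priori boundary or visibility theory). This is where compactness of $M$ is essential: the pulled-back geometry is $\pi_{1}(M)$-periodic and Brownian motion equidistributes on the compact quotient, so a defect that is nonnegative and asymptotically mean-zero should vanish identically. Making this ergodic step rigorous, together with establishing the heat-kernel bound with the precise spatial rate $\sqrt{\lambda_{0}}$, is the crux of the proof.
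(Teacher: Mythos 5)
Your overall architecture---trapping the Kaimanovich entropy between a spectral lower bound and a curvature-forced upper bound---is in fact the paper's architecture, but both halves of your implementation have gaps, and the second one is fatal as written. First, the refined heat-kernel bound $\tilde p(t,o,y)\leq C_{\varepsilon}\exp\bigl(-\lambda_{0}t-\sqrt{\lambda_{0}}\,d(o,y)-\tfrac{d(o,y)^{2}}{(4+\varepsilon)t}\bigr)$, on which your entire lower bound $h\geq(\sqrt{\lambda_{0}}+\ell/2)^{2}$ rests, is asserted rather than proved; the spatial rate $\sqrt{\lambda_{0}}$ is a genuinely nontrivial analytic input. The paper needs none of this machinery (no drift, no Guivarc'h inequality, no Gaussian bounds): Ledrappier's inequality $\beta\geq4\lambda_{0}$ (Lemma \ref{Le}) is proved in a few lines by differentiating $\int p\log p$, integrating by parts, and using stochastic completeness, and it already yields $\beta=(n-1)^{2}$ under the hypothesis. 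Your longer chain through $v$ and $\ell$ recovers the same equality plus $\ell=v=n-1$, at the cost of an unproved estimate.

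The fatal gap is the endgame. Even granting the ergodic upgrade to $\mathrm{Hess}\,b_{\xi}=g-db_{\xi}\otimes db_{\xi}$ and the splitting $\widetilde{M}=\mathbb{R}\times_{e^{t}}\Sigma$, your claim that ``saturation of $Ric\geq-(n-1)$ forces $\Sigma$ flat'' is false: the metric $dt^{2}+e^{2t}g_{\Sigma}$ satisfies $Ric\geq-(n-1)$ if and only if $Ric_{\Sigma}\geq0$, and the Ricci bound is saturated in the $\partial_{t}$ direction for \emph{every} such $\Sigma$, so nothing in your argument excludes a nonflat $\Sigma$ with $Ric_{\Sigma}\geq0$. (The ergodic step itself is also unproved and hard: Brownian paths equidistribute with respect to harmonic measure, not volume; under a bare Ricci lower bound Busemann functions are merely Lipschitz with $\Delta b\leq n-1$ in the barrier sense, and there is no boundary theory attaching a limit point to the path. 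Your intermediate equality $v=n-1$ does characterize $\mathbb{H}^{n}$, but that is a later, substantial theorem of Ledrappier--Wang, not a routine upgrade.) The paper avoids both difficulties by working with Martin kernels instead of Busemann functions: Proposition \ref{beta2} expresses $\beta$ as $\int_{M}\int_{\partial^{\ast}\widetilde{M}}\xi(x)\left\vert\nabla\log\xi(x)\right\vert^{2}d\nu(\xi)\,dm(x)$, so $\beta=(n-1)^{2}$ forces $\left\vert\nabla\log\xi\right\vert\equiv n-1$ for $\nu$-a.e.\ minimal harmonic $\xi$ directly from the sharp Li--Wang gradient estimate (Lemma \ref{ge})---smooth functions, no ergodic argument---and, crucially, it then uses a \emph{second} extremal kernel $\eta$: after splitting along one $\xi$, the restriction $u$ of $\eta^{-1/(n-1)}$-type data to the slice satisfies the overdetermined system (\ref{dit}), $D^{2}u=u(1+\lambda)g_{\Sigma}$, $\left\vert\nabla u\right\vert^{2}/u^{2}=1-\lambda^{2}$, and Theorem \ref{Rn} shows this forces $\Sigma\cong\mathbb{R}^{n-1}$, hence $\widetilde{M}\cong\mathbb{H}^{n}$. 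Your single-direction drift argument produces at most one saturated function and has no counterpart to this two-function mechanism, which is precisely what pins down flatness of the fiber.
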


It is worth pointing out that complete Riemannian manifolds $N^{n}$ with
$Ric\geq-\left(  n-1\right)  $ and $\lambda_{0}=\left(  n-1\right)  ^{2}/4$
are abundant. There are many conformally compact examples by a theorem of Lee
\cite{Lee}. Therefore the more subtle assumption that $\widetilde{M}$ covers a
compact manifold is essential.

If we further assume that $g$ is negatively curved, then the result follows
from the following well known theorem. Recall the volume entropy $h$ is
defined to be%
\[
h=\lim_{r\rightarrow+\infty}\frac{\log V(p,r)}{r},
\]
where $V\left(  p,r\right)  $ is the volume of the geodesic ball with center
$p $ and radius $r$ in $\widetilde{M}$.

\begin{theorem}
Let $\left(  M^{n},g\right)  $ be a compact Riemannian manifold with negative
curvature and $\pi:\widetilde{M}\rightarrow M$ its universal covering. If
$\lambda_{0}\left(  \widetilde{M}\right)  =h^{2}/4$, then $M$ is locally symmetric.
\end{theorem}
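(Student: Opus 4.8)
The plan is to work on the universal cover $\widetilde{M}$, which under the hypotheses is a Cartan--Hadamard manifold carrying a cocompact isometric action of $\pi_1(M)$, so that all asymptotic invariants below are well defined. I would use the geometric boundary sphere $S_\infty$, the Busemann functions $b_\xi$ and their horospheres $\{b_\xi=\mathrm{const}\}$, and the Poisson (Martin) kernel $k(x,\xi)$ of the Brownian motion together with its harmonic exit measure $\nu_{x_0}$ at a base point $x_0$. Beyond the volume entropy $h$ and the spectral bottom $\lambda_0$, the decisive invariants are the linear drift $\ell=\lim_{t\to\infty}d(x_0,X_t)/t$ and the Kaimanovich (stochastic) entropy $\beta$ of the Brownian motion. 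On $\mathbb{H}^n$ one has $h=\ell=n-1$, $\beta=(n-1)^2$ and $\lambda_0=(n-1)^2/4$, so that the five quantities $4\lambda_0,\ \ell^2,\ \beta,\ \ell h,\ h^2$ all coincide; the entire strategy is to show that the hypothesis $\lambda_0=h^2/4$ forces this coincidence in general, and then to read off rigidity.

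The mechanism is a chain of sharp inequalities
\[
4\lambda_0 \;\le\; \ell^2 \;\le\; \beta \;\le\; \ell\,h \;\le\; h^2 .
\]
The inequality $\ell^2\le\beta$ is Cauchy--Schwarz applied to Kaimanovich's integral formulas $\ell=\int_{S_\infty}\langle\nabla_x\log k(x_0,\xi),\nabla_x b_\xi(x_0)\rangle\,d\nu_{x_0}(\xi)$ and $\beta=\int_{S_\infty}|\nabla_x\log k(x_0,\xi)|^2\,d\nu_{x_0}(\xi)$, using $|\nabla b_\xi|\equiv1$; equality forces $\nabla_x\log k(\,\cdot\,,\xi)=\ell\,\nabla_x b_\xi$ for $\nu_{x_0}$-almost every $\xi$. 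The inequality $\beta\le\ell h$ is the fundamental inequality of Guivarc'h, and together with $\ell^2\le\beta$ it yields $\ell\le h$, hence $\ell h\le h^2$. The leftmost inequality $4\lambda_0\le\ell^2$ is the sharp comparison between the bottom of the spectrum and the drift; this is the analytic heart of the estimate and the place where negative curvature enters decisively, and I would establish it (with its equality case) following the heat-kernel and ground-state analysis of Ledrappier.

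Feeding in $\lambda_0=h^2/4$, i.e.\ $4\lambda_0=h^2$, collapses the chain: both ends agree, so every inequality is an equality. In particular $\ell=h$ and the Cauchy--Schwarz equality holds, giving (after normalization) $\log k(x,\xi)=h\,b_\xi(x)$ for a.e.\ $\xi$, and by continuity of $k$ and $b_\xi$ in negative curvature for all $\xi$. Applying $\Delta_x$ to the harmonic function $k(\,\cdot\,,\xi)=e^{h b_\xi}$ and using $|\nabla b_\xi|\equiv1$ gives $h^2+h\,\Delta b_\xi=0$, i.e.\ $\Delta b_\xi\equiv-h$ for every $\xi$ (the sign reflecting the normalization of $b_\xi$). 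Thus all horospheres have constant mean curvature and $\widetilde{M}$ is asymptotically harmonic. At this point I would invoke the rigidity theorem of Foulon--Labourie (together with the Besson--Courtois--Gallot minimal-entropy rigidity): a compact negatively curved manifold whose universal cover is asymptotically harmonic is locally symmetric.

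I expect the main obstacle to be the sharp spectral--drift inequality $4\lambda_0\le\ell^2$ and, even more, its equality discussion: promoting the Cauchy--Schwarz identity from $\nu_{x_0}$-almost every $\xi$ to the pointwise statement $\Delta b_\xi\equiv-h$ on all of $\widetilde{M}$ requires the regularity of Busemann functions and of the Poisson kernel, which in turn relies on the Anosov property of the geodesic flow in negative curvature. The concluding implication ``asymptotically harmonic $\Rightarrow$ locally symmetric'' is itself a deep theorem that I would use as a black box.
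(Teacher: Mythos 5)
Your proposal takes essentially the same route as the paper, which does not reprove this theorem but records exactly the three-step chain you describe: Ledrappier's reduction of $4\lambda_{0}=h^{2}$ to asymptotic harmonicity \cite{L1}, then Foulon--Labourie \cite{FL}, then Besson--Courtois--Gallot \cite{BCG}. Your fleshed-out first step --- collapsing a chain of sharp inequalities among $4\lambda_{0}$, $\ell^{2}$, $\beta$, $\ell h$, $h^{2}$, with the Cauchy--Schwarz equality case yielding $\log k(\cdot,\xi)=h\,b_{\xi}$ and hence constant mean curvature horospheres --- is precisely the content of Ledrappier's argument, using the same Kaimanovich entropy $\beta$ that the paper develops in Section 3. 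One caveat: the link you single out as the ``analytic heart,'' namely $4\lambda_{0}\leq\ell^{2}$, is not standard and you should not rest the proof on it; the anchor actually available is $4\lambda_{0}\leq\beta$, which is Lemma \ref{Le} of this paper (with a half-page proof via the heat kernel). With that link the collapse is immediate and your questionable inequality is never needed: $h^{2}=4\lambda_{0}\leq\beta\leq\ell h$ gives $\ell\geq h$, while $\ell^{2}\leq\beta\leq\ell h$ gives $\ell\leq h$; hence $\ell=h$ and $\beta=h^{2}=\ell^{2}$, so your Cauchy--Schwarz equality discussion applies verbatim. (Also note that Kaimanovich's formulas for $\ell$ and $\beta$ involve an additional average over $M$ with respect to the normalized volume $dm$, as in Proposition \ref{beta2}, not just an integral over the boundary at the base point; this does not affect the Cauchy--Schwarz argument.) Your remaining cautions --- promoting the a.e.-$\xi$ identity to all $\xi$ via full support of harmonic measure and continuity of the Martin kernel in negative curvature, and using \cite{FL} plus \cite{BCG} as black boxes --- match the paper's outline exactly.
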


This is a deep theorem whose proof is difficult and involved : first it is
proved by Ledrappier \cite{L1} that $\widetilde{M}$ is asymptotically harmonic
in the sense that all the level sets of the Buseman functions have constant
mean curvature; then by a theorem of Foulon and Labourie \cite{FL} the
geodesic flow of $\left(  M,g\right)  $ is $C^{\infty}$-conjugate to that of a
locally symmetric space of rank one; finally by the work of
Besson-Courtois-Gallot \cite{BCG} one concludes that $\left(  M,g\right)  $ is
locally symmetric.

In contrast, we make no additional assumption on the sectional curvature and
prove the Main Theorem in a direct way. A key ingredient in our proof is the
entropy introduced by Kaimanovich \cite{K}, which we learned from \cite{L1}.
The paper is organized as follows. In Section 2, we discuss positive harmonic
functions and the Martin boundary. In Section 3, we present the Kaimanovich
entropy. We establish a sharp inequality which characterizes hyperbolic
manifolds. The main theorem is then proved in Section 4.

\begin{acknowledgement}
I wish to thank Jiaping Wang for bringing to my attention this problem and for
some stimulating discussions on his joint work with Li. I am very grateful to
Professor F. Ledrappier for explaining Kaimanovich's work to me and to
Professor J. Cao for his interest and encouragement.
\end{acknowledgement}

\section{Harmonic functions and the Martin boundary}

In this section \ we collect some fundamental facts on positive harmonic
functions. There are two aspects: potential theory and geometric analysis. On
potential theory (more specifically, the theory of Martin boundary) our
primary reference is Ancona \cite{A} (\cite{AG} and \cite{H} are also very
useful) . Let $\widetilde{M}^{n}$ be a complete Riemannian manifold with a
base point $o$. We assume that $\widetilde{M}$ is non-parabolic, that is, it
has a positive Green's function. It is well known that $\widetilde{M}$ is
non-parabolic if $\lambda_{0}\left(  \widetilde{M}\right)  >0$ (see, e.g.
\cite{SY}). The vector space $\mathcal{H}\left(  \widetilde{M}\right)  $ of
harmonic functions with seminorms
\[
\left\Vert u\right\Vert _{K}=\sup_{K}\left\vert u\left(  x\right)  \right\vert
,K\subset\widetilde{M}\text{ compact}%
\]
is a Frechet space. Let $\mathcal{K}_{o}=\{u\in\mathcal{H}\left(  M\right)
:u\left(  o\right)  =1,u>0\}$. This is a convex and compact set.

\begin{definition}
A harmonic function $h>0$ on $\widetilde{M}$ is called minimal if any
nonnegative harmonic function $\leq h$ is proportional to $h$.
\end{definition}

\begin{remark}
If $h\left(  o\right)  =1$, then $h$ is minimal iff $h$ is an extremal point
of $\mathcal{K}_{o}$.
\end{remark}

\begin{definition}
The minimal Martin boundary of $M$ is
\[
\partial^{\ast}\widetilde{M}=\{h\in\mathcal{K}_{o}:h\text{ is minimal}\}.
\]

\end{definition}

According to a theorem of Choquet (\cite{A}), for any positive harmonic
function $h$ there is a unique Borel measure $\mu^{h}$ on $\partial^{\ast
}\widetilde{M}$ such that%
\[
h\left(  x\right)  =\int_{\partial^{\ast}\widetilde{M}}\xi\left(  x\right)
d\mu^{h}\left(  \xi\right)  .
\]
Let $\nu$ be the measure corresponding to the harmonic function $1$. Thus%
\begin{equation}
1=\int_{\partial^{\ast}\widetilde{M}}\xi\left(  x\right)  d\nu\left(
\xi\right)  .\label{defnu}%
\end{equation}
For $f\in L^{\infty}\left(  \partial^{\ast}\widetilde{M}\right)  $ we get a
bounded harmonic function%
\[
H_{f}\left(  x\right)  =\int_{\partial^{\ast}\widetilde{M}}f\left(
\xi\right)  \xi\left(  x\right)  d\nu\left(  \xi\right)  .
\]

When there is a lower Ricci bound, Yau's gradient estimate for positive
harmonic functions (see e.g. \cite{SY}) is a very powerful tool. The following
sharp version is due to Li-Wang \cite{LW2}.

\begin{lemma}
\label{ge}Let $\left(  N^{n},g\right)  $ be a complete Riemannian manifold
with $Ric\geq-\left(  n-1\right)  $. For a positive harmonic function $f$ on
$N$ we have%
\[
\left\vert \nabla\log f\right\vert \leq n-1
\]
on $N$.
\end{lemma}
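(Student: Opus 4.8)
The plan is to set $u=\log f$. Since $f>0$ is harmonic, $u$ is smooth and $\Delta f=0$ becomes $\Delta u=-\left\vert \nabla u\right\vert ^{2}$. Writing $\phi=\left\vert \nabla u\right\vert ^{2}=\left\vert \nabla\log f\right\vert ^{2}$, the goal is the pointwise bound $\phi\leq\left( n-1\right) ^{2}$. I would apply the Bochner formula to $u$,
\[
\tfrac{1}{2}\Delta\phi=\left\vert \nabla^{2}u\right\vert ^{2}+\left\langle \nabla u,\nabla\Delta u\right\rangle +Ric\left( \nabla u,\nabla u\right) ,
\]
and substitute $\Delta u=-\phi$ together with $Ric\geq-\left( n-1\right) $ to obtain
\[
\tfrac{1}{2}\Delta\phi\geq\left\vert \nabla^{2}u\right\vert ^{2}-\left\langle \nabla u,\nabla\phi\right\rangle -\left( n-1\right) \phi.
\]

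The source of the sharp constant is a refined lower bound for $\left\vert \nabla^{2}u\right\vert ^{2}$. Whenever $\nabla\phi=0$, the identity $\nabla\phi=2\nabla^{2}u\left( \nabla u,\cdot\right) $ forces the gradient direction into the kernel of the Hessian; choosing an orthonormal frame with $e_{1}=\nabla u/\left\vert \nabla u\right\vert $ kills the entries $u_{1j}$, so the full trace $\Delta u=-\phi$ is carried by the remaining $n-1$ diagonal entries. Cauchy--Schwarz on those gives $\left\vert \nabla^{2}u\right\vert ^{2}\geq\left( \Delta u\right) ^{2}/\left( n-1\right) =\phi^{2}/\left( n-1\right) $, with the improved denominator $n-1$ in place of the naive $n$. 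If $\phi$ attained an interior maximum at $x_{0}$, then $\nabla\phi\left( x_{0}\right) =0$ and $\Delta\phi\left( x_{0}\right) \leq0$ would yield
\[
0\geq\frac{\phi^{2}}{n-1}-\left( n-1\right) \phi=\frac{\phi}{n-1}\bigl( \phi-\left( n-1\right) ^{2}\bigr) ,
\]
hence $\phi\left( x_{0}\right) \leq\left( n-1\right) ^{2}$.

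Because $N$ is only complete, the maximum of $\phi$ need not be attained, and localization is the main obstacle. I would fix $o$ and a radius $R$, choose a cutoff $\eta$ that is $1$ on $B\left( o,R\right) $ and supported in $B\left( o,2R\right) $ with $\left\vert \nabla\eta\right\vert \leq C/R$ and $\Delta\eta\geq-C/R$, the latter from the Laplacian comparison theorem under $Ric\geq-\left( n-1\right) $ (with Calabi's trick at the cut locus), and then apply the maximum principle to $\eta\phi$. The subtlety is that at the interior maximum $x_{0}$ one only has $\nabla\phi=-\left( \phi/\eta\right) \nabla\eta$, so the gradient sits merely \emph{approximately} in the kernel of the Hessian; the refined estimate then acquires errors of order $1/R$, and the real work is to control these and show $\sup_{B\left( o,R\right) }\phi\leq\left( n-1\right) ^{2}+C\left( n\right) /R$. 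Letting $R\rightarrow\infty$ removes the error and gives $\left\vert \nabla\log f\right\vert \leq n-1$ everywhere. The hard part is precisely ensuring that the sharp constant $\left( n-1\right) ^{2}$ survives the cutoff rather than being degraded by the localization error.
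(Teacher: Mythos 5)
Your plan follows the same route as the paper's proof (itself a sketch of Li--Wang's argument in \cite{LW2}): the logarithmic substitution, the Bochner formula, a refined Cauchy--Schwarz in a frame adapted to the gradient giving the denominator $n-1$ in place of $n$, and a cutoff plus maximum principle to handle noncompactness. Your critical-point computation is correct as far as it goes: where $\nabla\phi=0$ (your $\phi=\left\vert \nabla u\right\vert ^{2}$) one indeed gets $\left\vert \nabla^{2}u\right\vert ^{2}\geq\phi^{2}/\left(  n-1\right)  $ and hence $\phi\leq\left(  n-1\right)  ^{2}$ at any attained interior maximum, and your cutoff conditions ($\left\vert \nabla\eta\right\vert \leq C/R$, $\Delta\eta\geq-C/R$ via Laplacian comparison and Calabi's trick) point in the right direction.

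However, the step you explicitly defer (``the real work is to control these'') is not a routine afterthought: it is precisely the content of the paper's inequality (\ref{kato}) (equivalently (\ref{fe})), a \emph{pointwise} estimate valid wherever $\nabla u\neq0$ with no assumption $\nabla\phi=0$, namely, in your notation,
\[
\left\vert \nabla^{2}u\right\vert ^{2}\geq\frac{n}{4\left(  n-1\right)
}\frac{\left\vert \nabla\phi\right\vert ^{2}}{\phi}+\frac{\phi^{2}
+\left\langle \nabla u,\nabla\phi\right\rangle }{n-1}.
\]
As written, your proposal proves the refined Hessian bound only at exact critical points of $\phi$ and then asserts that the perturbed version loses only $O(1/R)$; a complete proof must make that quantitative. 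The good news is that your own frame computation yields the display \emph{exactly} rather than approximately: since $\phi_{j}=2\left\vert \nabla u\right\vert u_{1j}$ in the adapted frame, one has $u_{1j}=\phi_{j}/\left(  2\sqrt{\phi}\right)  $, the trace over $e_{2},\dots,e_{n}$ equals $-\phi-u_{11}$, and expanding $u_{11}^{2}+2\sum_{j\geq2}u_{1j}^{2}+\left(  \phi+u_{11}\right)  ^{2}/\left(  n-1\right)  $ together with $u_{11}^{2}\leq\left\vert \nabla\phi\right\vert ^{2}/\left(  4\phi\right)  $ and $2\phi u_{11}=\left\langle \nabla u,\nabla\phi\right\rangle $ gives the inequality above. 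With it in hand, at the maximum of $\eta\phi$ the cross term contributes only $O\bigl(  \phi^{3/2}/R\bigr)  $, subleading against $\phi^{2}/\left(  n-1\right)  $, so your target $\sup_{B\left(  o,R\right)  }\phi\leq\left(  n-1\right)  ^{2}+C\left(  n\right)  /R$ does follow and the sharp constant survives $R\rightarrow\infty$. So nothing in your plan would fail, but the proof is incomplete at exactly the point where the sharpness lives; you should either derive the displayed inequality (as the paper does) or cite \cite{LW2}, where the cutoff argument is carried out in full. Note finally that the paper also records the equality case (\ref{ke}) of this inequality because it is needed later for the rigidity statement (Theorem \ref{beta}); your critical-point-only version would not deliver that.
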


Let $\phi=\log f$. Then $\Delta\phi=-\left\vert \nabla\phi\right\vert ^{2}%
$\thinspace. Using the Bochner formula we have%
\begin{align}
\frac{1}{2}\Delta\left\vert \nabla\phi\right\vert ^{2}  & =\left\vert
D^{2}\phi\right\vert ^{2}+\left\langle \nabla\phi,\nabla\Delta\phi
\right\rangle +Ric\left(  \nabla\phi,\nabla\phi\right) \label{Bochner}\\
& \geq\left\vert D^{2}\phi\right\vert ^{2}-\left\langle \nabla\phi
,\nabla\left\vert \nabla\phi\right\vert ^{2}\right\rangle -\left(  n-1\right)
\left\vert \nabla\phi\right\vert ^{2}.\nonumber
\end{align}
Using the equation of $\phi$ and some algebra one can derive the following
inequality at any point where $\nabla\phi\neq0$
\begin{equation}
\left\vert D^{2}\phi\right\vert ^{2}\geq\frac{n}{4\left(  n-1\right)  }%
\frac{\left\vert \nabla\left\vert \nabla\phi\right\vert ^{2}\right\vert ^{2}%
}{\left\vert \nabla\phi\right\vert ^{2}}+\frac{\left\vert \nabla
\phi\right\vert ^{4}+\left\langle \nabla\phi,\nabla\left\vert \nabla
\phi\right\vert ^{2}\right\rangle }{n-1}.\label{kato}%
\end{equation}
Moreover equality holds iff%
\begin{equation}
D^{2}\phi=-\frac{\left\vert \nabla\phi\right\vert ^{2}}{n-1}\left[  g-\frac
{1}{\left\vert \nabla\phi\right\vert ^{2}}d\phi\otimes d\phi\right]
.\label{ke}%
\end{equation}
Combining (\ref{kato}) with (\ref{Bochner}) yields%
\begin{align}
\frac{1}{2}\Delta\left\vert \nabla\phi\right\vert ^{2}  & \geq\frac
{n}{4\left(  n-1\right)  }\frac{\left\vert \nabla\left\vert \nabla
\phi\right\vert ^{2}\right\vert ^{2}}{\left\vert \nabla\phi\right\vert ^{2}%
}-\frac{n-2}{n-1}\left\langle \nabla\phi,\nabla\left\vert \nabla
\phi\right\vert ^{2}\right\rangle \label{fe}\\
& -\left(  n-1\right)  \left\vert \nabla\phi\right\vert ^{2}+\frac{\left\vert
\nabla\phi\right\vert ^{4}}{n-1}.\nonumber
\end{align}
The remaining part of the proof is to construct an appropriate cut-off
function and apply the maximum principle to show $\left\vert \nabla
\phi\right\vert \leq n-1$.

If it happens that $\left\vert \nabla\phi\right\vert \equiv n-1$, then
(\ref{fe}) is an equality. Therefore by (\ref{ke}) we have%
\[
D^{2}\phi=-\left(  n-1\right)  \left[  g-\frac{1}{\left(  n-1\right)  ^{2}%
}d\phi\otimes d\phi\right]  .
\]
One can then prove that $N$ splits as a warped product. More precisely we have
the following lemma from \cite{LW2}.

\begin{lemma}
Let $\left(  N^{n},g\right)  $ be a complete Riemannian manifold with
$Ric\geq-\left(  n-1\right)  $. If there exists a positive harmonic function
$f$ on $N$ such that $\left\vert \nabla\log f\right\vert \equiv n-1$on $N$,
then $\left(  N^{n},g\right)  $ is isometric to $\left(
%TCIMACRO{\U{211d} }%
%BeginExpansion
\mathbb{R}
%EndExpansion
\times\Sigma^{n-1},dt^{2}+e^{2t}g_{\Sigma}\right)  $, where $\left(
\Sigma^{n-1},g_{\Sigma}\right)  $ is complete and has nonnegative Ricci
curvature. Moreover $\log f=-\left(  n-1\right)  t$.
\end{lemma}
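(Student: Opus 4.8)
The plan is to exploit the pointwise Hessian identity just derived. Writing $\phi=\log f$ with $\left\vert \nabla\phi\right\vert \equiv n-1$, we have
\[
D^{2}\phi=-\left(  n-1\right)  \left[  g-\tfrac{1}{\left(  n-1\right)  ^{2}}d\phi\otimes d\phi\right]  .
\]
First I would rescale $\phi$ into a distance-type function by setting $t=-\phi/\left(  n-1\right)  $, so that $\log f=-\left(  n-1\right)  t$ as required, $\left\vert \nabla t\right\vert \equiv1$, and the identity becomes $D^{2}t=g-dt\otimes dt$. Contracting with $\nabla t$ gives $D^{2}t\left(  \nabla t,\cdot\right)  =0$, hence $\nabla_{\nabla t}\nabla t=0$: the integral curves of $\nabla t$ are unit-speed geodesics. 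Since $\nabla t$ never vanishes, $t$ has no critical points and each level set is a smooth hypersurface with unit normal $\nabla t$.

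Next I would globalize this into product coordinates. Because $N$ is complete, every integral curve of $\nabla t$ is a complete geodesic, so the gradient flow $\Phi_{s}$ is defined for all $s\in\mathbb{R}$ and satisfies $t\left(  \Phi_{s}\left(  x\right)  \right)  =t\left(  x\right)  +s$; in particular $t$ is onto $\mathbb{R}$ and $\Sigma:=t^{-1}\left(  0\right)  $ is a nonempty hypersurface. The map $F:\mathbb{R}\times\Sigma\rightarrow N$, $F\left(  s,y\right)  =\Phi_{s}\left(  y\right)  $, is then a diffeomorphism: injectivity and surjectivity follow by flowing any point to $\left\{  t=0\right\}  $ and reading off $s=t$, while $F$ is a local diffeomorphism since $\partial_{s}F=\nabla t$ is transverse to the slices. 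In these coordinates $g\left(  \partial_{s},\partial_{s}\right)  =\left\vert \nabla t\right\vert ^{2}=1$, and the cross terms vanish: for a flow-invariant field $X$ tangent to the slices one computes $\partial_{s}g\left(  \nabla t,X\right)  =g\left(  \nabla_{\nabla t}\nabla t,X\right)  +\tfrac{1}{2}X\left(  \left\vert \nabla t\right\vert ^{2}\right)  =0$, and $g\left(  \nabla t,X\right)  =0$ on $\left\{  t=0\right\}  $. Hence $g=ds^{2}+h_{s}$, with $h_{s}$ the induced metric on $\left\{  t=s\right\}  $.

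I would then identify the warping. From $\mathcal{L}_{\nabla t}g=2D^{2}t=2\left(  g-dt\otimes dt\right)  $, restricting to vectors tangent to the slices (where $dt$ vanishes) gives $\partial_{s}h_{s}=2h_{s}$, so $h_{s}=e^{2s}g_{\Sigma}$ with $g_{\Sigma}:=h_{0}$. Renaming $s$ as $t$ yields $g=dt^{2}+e^{2t}g_{\Sigma}$. The curvature condition transfers by the standard warped-product formulas: for $\psi=e^{t}$ one finds $Ric_{N}\left(  V,W\right)  =Ric_{\Sigma}\left(  V,W\right)  -\left(  n-1\right)  g\left(  V,W\right)  $ for $V,W$ tangent to $\Sigma$, so $Ric_{N}\geq-\left(  n-1\right)  $ forces $Ric_{\Sigma}\geq0$ (the $\partial_{t}$ and mixed directions automatically give $Ric_{N}\left(  \partial_{t},\partial_{t}\right)  =-\left(  n-1\right)  $ and $0$).

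Finally, completeness of $\left(  \Sigma,g_{\Sigma}\right)  $ follows from that of $N$: on the slice $\left\{  t=0\right\}  $ the induced metric is exactly $g_{\Sigma}$, so $d_{N}\left(  \iota\left(  y\right)  ,\iota\left(  y^{\prime}\right)  \right)  \leq d_{\Sigma}\left(  y,y^{\prime}\right)  $, whence a $g_{\Sigma}$-Cauchy sequence is $N$-Cauchy, converges in $N$ to a point with $t=0$, and (using continuity of $F^{-1}$) this is also its $g_{\Sigma}$-limit. I expect the main work to lie not in any single computation but in the globalization step — verifying that the gradient flow is complete and that $F$ is a genuine diffeomorphism onto all of $N$ — which is precisely where completeness of $N$ is used; the curvature identity and the completeness of $\Sigma$ are then routine.
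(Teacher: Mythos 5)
Your proof is correct and follows essentially the route the paper intends: starting from the displayed Hessian identity $D^{2}\phi=-\left(n-1\right)\left[g-\frac{1}{\left(n-1\right)^{2}}d\phi\otimes d\phi\right]$, you carry out the standard gradient-flow splitting argument that the paper itself only sketches and cites from Li--Wang \cite{LW2}. All the key steps check out — completeness of the flow of $\nabla t$ (its integral curves are unit-speed geodesics), the diffeomorphism $F$, the warping equation $\partial_{s}h_{s}=2h_{s}$ from $\mathcal{L}_{\nabla t}g=2D^{2}t$, the transfer of the Ricci bound via the warped-product formulas, and the completeness of $\Sigma$.
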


\section{The Kaimanovich entropy}

In \cite{K} Kaimanovich studied Brownian motion on a regular covering
$\widetilde{M}$ of a compact Riemannian manifold $M$. He introduced a
remarkable entropy which will play an important role in the proof of our main
theorem. We will present his theory using the minimal Martin boundary instead
of the stationary boundary for Brownian motion. The equivalence of the two
approaches can be seen from \cite{A} (section 3 in particular). For more
detailed discussions related to the Kaimanovich entropy we refer to several
papers by Ledrappier \cite{L1,L2, L3}.

From then on we assume that $\widetilde{M}$ is the universal covering of a
compact manifold $M$. We identify $\pi_{1}\left(  M\right)  $ with the group
$\Gamma$ of deck transformations on $\widetilde{M}$ and therefore
$M=\widetilde{M}/\Gamma$. There is a natural $\Gamma$-action on $\partial
^{\ast}\widetilde{M}$: for $\xi\in\partial^{\ast}\widetilde{M}$ and $\gamma
\in\Gamma$%
\[
\left(  \gamma\cdot\xi\right)  \left(  x\right)  =\frac{\xi\left(  \gamma
^{-1}x\right)  }{\xi\left(  \gamma^{-1}o\right)  }.
\]
As a result for each $\gamma\in\Gamma$ we have the pushforward measure
$\gamma_{\ast}\nu$ such that $\gamma_{\ast}\nu\left(  E\right)  =\nu\left(
\gamma^{-1}\cdot E\right)  $ for any Borel set $E\subset\partial^{\ast
}\widetilde{M}$. By the definition of $\nu$ and a change of variables
\begin{align*}
1  & =\int_{\partial^{\ast}\widetilde{M}}\xi\left(  \gamma^{-1}x\right)
d\nu\left(  \xi\right) \\
& =\int_{\partial^{\ast}\widetilde{M}}\left(  \gamma\cdot\xi\right)  \left(
x\right)  \xi\left(  \gamma^{-1}o\right)  d\nu\left(  \xi\right)  .\\
& =\int_{\partial^{\ast}\widetilde{M}}\eta\left(  x\right)  \frac{1}%
{\eta\left(  \gamma o\right)  }d\gamma_{\ast}\nu\left(  \eta\right)  .
\end{align*}
By the uniqueness of $\nu$ we have
\[
d\gamma_{\ast}\nu\left(  \xi\right)  =\xi\left(  \gamma o\right)  d\nu\left(
\xi\right)  .
\]
We define%

\[
\phi\left(  x,y\right)  =-\int_{\partial^{\ast}\widetilde{M}}\xi\left(
y\right)  \log\frac{\xi\left(  x\right)  }{\xi\left(  y\right)  }d\nu\left(
\xi\right)  .
\]
It is easy to show that $\phi\left(  \gamma x,\gamma y\right)  =\phi\left(
x,y\right)  $ for any $\gamma\in\Gamma$ and%
\begin{align*}
\phi\left(  x,x\right)   & =0,\\
\Delta_{y}\phi\left(  x,y\right)   & =\int_{\partial^{\ast}\widetilde{M}}%
\xi\left(  y\right)  \left\vert \nabla\log\xi\left(  y\right)  \right\vert
^{2}d\nu\left(  \xi\right)  .
\end{align*}
By Yau's gradient estimate we have
\begin{align*}
\left\vert \phi\left(  x,y\right)  \right\vert  & \leq Cd\left(  x,y\right)
,\\
\left\vert \nabla_{y}\phi\left(  x,y\right)  \right\vert  & \leq Cd\left(
x,y\right)  ,\\
\left\vert \Delta_{y}\phi\left(  x,y\right)  \right\vert  & \leq C.
\end{align*}
Let $p\left(  t,x,y\right)  $ be the heat kernel on $\widetilde{M}$. For any
$\gamma\in\Gamma$%
\[
p\left(  t,\gamma x,\gamma y\right)  =p\left(  t,x,y\right)  .
\]
We define
\[
u\left(  \tau,x\right)  =\frac{1}{\tau}\int_{\widetilde{M}}\phi\left(
x,y\right)  p\left(  \tau,x,y\right)  dv\left(  y\right)  .
\]
It is easy to see that $u$ descends to $M$. Indeed,%
\begin{align*}
u\left(  \tau,\gamma x\right)   & =\frac{1}{\tau}\int_{\widetilde{M}}%
\phi\left(  \gamma x,y\right)  p\left(  \tau,\gamma x,y\right)  dv\left(
y\right) \\
& =\frac{1}{\tau}\int_{\widetilde{M}}\phi\left(  x,\gamma^{-1}y\right)
p\left(  \tau,\gamma x,y\right)  dv\left(  y\right) \\
& =\frac{1}{\tau}\int_{\widetilde{M}}\phi\left(  x,z\right)  p\left(
\tau,\gamma x,\gamma z\right)  dv\left(  z\right) \\
& =\frac{1}{\tau}\int_{\widetilde{M}}\phi\left(  x,z\right)  p\left(
\tau,x,z\right)  dv\left(  z\right) \\
& =u\left(  \tau,\gamma x\right)  .
\end{align*}
We can rewrite%
\begin{align*}
u\left(  \tau,x\right)   & =\frac{1}{\tau}\int_{\widetilde{M}}\phi\left(
x,y\right)  p\left(  \tau,x,y\right)  dv\left(  y\right) \\
& =\frac{1}{\tau}\int_{\widetilde{M}}\phi\left(  x,y\right)  \left(  \int
_{0}^{\tau}\frac{\partial}{\partial t}p\left(  t,x,y\right)  dt\right)
dv\left(  y\right) \\
& =\frac{1}{\tau}\int_{0}^{\tau}\left(  \int_{\widetilde{M}}\phi\left(
x,y\right)  \Delta_{y}p\left(  t,x,y\right)  dv\left(  y\right)  \right)  dt\\
& =\frac{1}{\tau}\int_{0}^{\tau}\left(  \int_{\widetilde{M}}\Delta_{y}%
\phi\left(  x,y\right)  p\left(  t,x,y\right)  dv\left(  y\right)  \right)
dt\\
& =\int_{\partial^{\ast}\widetilde{M}}\left[  \frac{1}{\tau}\int_{0}^{\tau
}\left(  \int_{\widetilde{M}}\xi\left(  y\right)  \left\vert \nabla\log
\xi\left(  y\right)  \right\vert ^{2}p\left(  t,x,y\right)  dv\left(
y\right)  \right)  dt\right]  d\nu\left(  \xi\right) \\
& =\int_{\widetilde{M}\times\partial^{\ast}\widetilde{M}\times\left[
0,1\right]  }\xi\left(  y\right)  \left\vert \nabla\log\xi\left(  y\right)
\right\vert ^{2}p\left(  \tau s,x,y\right)  dv\left(  y\right)  d\nu\left(
\xi\right)  ds
\end{align*}
i.e.%
\begin{equation}
u\left(  \tau,x\right)  =\int_{\widetilde{M}\times\partial^{\ast}\widetilde
{M}\times\left[  0,1\right]  }\xi\left(  y\right)  \left\vert \nabla\log
\xi\left(  y\right)  \right\vert ^{2}p\left(  \tau s,x,y\right)  dv\left(
y\right)  d\nu\left(  \xi\right)  ds.\label{eu}%
\end{equation}
Moreover%
\begin{align*}
\frac{\partial u}{\partial\tau}\left(  \tau,x\right)   & =\int_{\widetilde
{M}\times\partial^{\ast}\widetilde{M}\times\left[  0,1\right]  }\xi\left(
y\right)  \left\vert \nabla\log\xi\left(  y\right)  \right\vert ^{2}%
s\frac{\partial p}{\partial t}\left(  \tau s,x,y\right)  dv\left(  y\right)
d\nu\left(  \xi\right)  ds\\
& =\int_{\widetilde{M}\times\partial^{\ast}\widetilde{M}\times\left[
0,1\right]  }\xi\left(  y\right)  \left\vert \nabla\log\xi\left(  y\right)
\right\vert ^{2}s\Delta_{x}p\left(  \tau s,x,y\right)  dv\left(  y\right)
d\nu\left(  \xi\right)  ds\\
& =\Delta\left(  \int_{\widetilde{M}\times\partial^{\ast}\widetilde{M}%
\times\left[  0,1\right]  }\xi\left(  y\right)  \left\vert \nabla\log
\xi\left(  y\right)  \right\vert ^{2}sp\left(  \tau s,x,y\right)  dv\left(
y\right)  d\nu\left(  \xi\right)  ds\right)  .
\end{align*}
As a result $\int_{M}u\left(  \tau,x\right)  dx$ is independent of $\tau$. Let
$dm$ be the normalized volume form.

\begin{definition}
The number%
\[
\beta\left(  \widetilde{M}\right)  =\int_{M}u\left(  \tau,x\right)  dm\left(
x\right)
\]
is called the Kaimanovich entropy.
\end{definition}

\bigskip

\bigskip By (\ref{eu}) it is clear that
\[
u\left(  \tau,x\right)  \rightarrow\int_{\partial^{\ast}\widetilde{M}}%
\xi\left(  x\right)  \left\vert \nabla\log\xi\left(  x\right)  \right\vert
^{2}d\nu\left(  \xi\right)
\]
as $\tau\rightarrow0$. Hence we also obtain the following formula for $\beta$
from \cite{K}.

\begin{proposition}
\label{beta2}%
\[
\beta\left(  \widetilde{M}\right)  =\int_{M}\left(  \int_{\partial^{\ast
}\widetilde{M}}\xi\left(  x\right)  \left\vert \nabla\log\xi\left(  x\right)
\right\vert ^{2}d\nu\left(  \xi\right)  \right)  dm\left(  x\right)  .
\]

\end{proposition}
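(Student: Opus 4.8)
The plan is to exploit two facts already in place: that $\beta(\widetilde M)=\int_M u(\tau,x)\,dm(x)$ does not depend on $\tau$, and that $u(\tau,x)$ has a pointwise limit as $\tau\to 0$ equal to the integrand on the right-hand side. Granting these, the proposition is just the statement that one may pass the limit $\tau\to 0$ inside the integral over $M$; so essentially all the content lies in justifying this interchange, and I would organize the argument around producing a uniform bound that legitimizes it.

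First I would record a uniform bound on $u$. Starting from
\[
u(\tau,x)=\int_0^1\Big(\int_{\widetilde M}\Delta_y\phi(x,y)\,p(\tau s,x,y)\,dv(y)\Big)\,ds
\]
(which is (\ref{eu}) after the substitution $t=\tau s$) together with the identity $\Delta_y\phi(x,y)=\int_{\partial^\ast\widetilde M}\xi(y)|\nabla\log\xi(y)|^2\,d\nu(\xi)$, I would invoke Yau's gradient estimate (Lemma \ref{ge}), giving $|\nabla\log\xi|\le n-1$ for each minimal $\xi$. Combined with the normalization $\int_{\partial^\ast\widetilde M}\xi(y)\,d\nu(\xi)=1$ from (\ref{defnu}), this yields $0\le\Delta_y\phi(x,y)\le(n-1)^2$ for all $x,y$. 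Since the heat semigroup is sub-Markovian, $\int_{\widetilde M}p(t,x,y)\,dv(y)\le 1$, so the inner integral lies in $[0,(n-1)^2]$, and hence $0\le u(\tau,x)\le(n-1)^2$ uniformly in $\tau$ and $x$.

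Next I would pin down the pointwise limit. Writing $F(t,x)=\int_{\widetilde M}\Delta_y\phi(x,y)\,p(t,x,y)\,dv(y)$, the function $y\mapsto\Delta_y\phi(x,y)$ is bounded and continuous, so the approximate-identity property of the heat kernel gives $F(t,x)\to\Delta_y\phi(x,y)\big|_{y=x}=\int_{\partial^\ast\widetilde M}\xi(x)|\nabla\log\xi(x)|^2\,d\nu(\xi)$ as $t\to0^+$. Because $u(\tau,x)=\int_0^1F(\tau s,x)\,ds$ and $0\le F\le(n-1)^2$, dominated convergence in $s$ shows that $u(\tau,x)$ tends to the same limit as $\tau\to0^+$.

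Finally I would assemble the conclusion. As $m$ is a probability measure on the compact manifold $M$ and $0\le u(\tau,x)\le(n-1)^2$, the bounded convergence theorem lets me pass $\tau\to0$ through $\int_M\cdot\,dm$; using that $\beta(\widetilde M)$ is $\tau$-independent,
\[
\beta(\widetilde M)=\lim_{\tau\to0}\int_M u(\tau,x)\,dm(x)=\int_M\Big(\int_{\partial^\ast\widetilde M}\xi(x)|\nabla\log\xi(x)|^2\,d\nu(\xi)\Big)\,dm(x),
\]
which is the claimed formula. The one genuinely delicate point is this interchange of limit and integration; the rest reduces to the uniform bound furnished by Yau's estimate and the standard short-time behaviour of the heat kernel.
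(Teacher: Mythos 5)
Your proposal is correct and follows essentially the same route as the paper: the paper also deduces the formula from the $\tau$-independence of $\int_M u(\tau,x)\,dm(x)$ together with the $\tau\to0$ limit of $u(\tau,x)$ read off from (\ref{eu}). The only difference is that the paper declares this limit ``clear,'' whereas you supply the justifying details (the uniform bound $0\le u\le(n-1)^2$ from Lemma \ref{ge} and (\ref{defnu}), and the dominated/bounded convergence arguments), all of which are sound.
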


Our main result of this section is the following sharp estimate for the
entropy under a lower Ricci bound.

\begin{theorem}
\label{beta}Let $\left(  M^{n},g\right)  $ be a compact Riemannian manifold
with $Ric\geq-\left(  n-1\right)  $ and $\pi:\widetilde{M}\rightarrow M$ its
universal covering. Then $\beta\left(  \widetilde{M}\right)  \leq\left(
n-1\right)  ^{2}$ and equality holds iff $\widetilde{M}$ is isometric to the
hyperbolic space $\mathbb{H}^{n}$.
\end{theorem}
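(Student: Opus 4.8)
The plan is to obtain the inequality as an immediate pointwise consequence of Yau's gradient estimate and to extract the hyperbolic rigidity from the equality case. For the bound I would start from Proposition \ref{beta2}. Each $\xi\in\partial^{\ast}\widetilde{M}$ is a positive harmonic function on $\widetilde{M}$, so Lemma \ref{ge} applies and gives $\left\vert\nabla\log\xi\right\vert\leq n-1$, hence $\left\vert\nabla\log\xi(x)\right\vert^{2}\leq(n-1)^{2}$ everywhere. Substituting this into the integrand and using the normalization $\int_{\partial^{\ast}\widetilde{M}}\xi(x)\,d\nu(\xi)=1$ from (\ref{defnu}) together with $\int_{M}dm=1$ yields $\beta(\widetilde{M})\leq(n-1)^{2}$ at once. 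The converse direction of the equivalence is routine: on $\mathbb{H}^{n}$ the minimal harmonic functions are the normalized $(n-1)$-powers of the Poisson kernel, for which $\left\vert\nabla\log\xi\right\vert\equiv n-1$, so every inequality above is an equality and $\beta=(n-1)^{2}$.

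The substance is the equality case. Equality forces $\left\vert\nabla\log\xi(x)\right\vert=n-1$ for $(\nu\times m)$-almost every $(\xi,x)$; by Fubini this holds, for $\nu$-a.e.\ fixed $\xi$, at $m$-a.e.\ $x$, and since such a $\xi$ is smooth it then holds at every $x$, i.e.\ $\left\vert\nabla\log\xi\right\vert\equiv n-1$ on $\widetilde{M}$. To pass from $\nu$-a.e.\ $\xi$ to all (or at least a dense set of) $\xi$, I would use two facts about the covering of a compact manifold: first, the quasi-invariance relation $d\gamma_{\ast}\nu(\xi)=\xi(\gamma o)\,d\nu(\xi)$ derived above, which together with the minimality of the $\Gamma$-action on $\partial^{\ast}\widetilde{M}$ gives $\operatorname{supp}\nu=\partial^{\ast}\widetilde{M}$; second, that convergence in the Martin topology is locally uniform, so by elliptic regularity $\xi\mapsto\left\vert\nabla\log\xi(x)\right\vert$ is continuous. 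For each such $\xi$, equality propagates back through (\ref{kato})--(\ref{fe}), so the Hessian identity (\ref{ke}) holds; writing $b_{\xi}=-\tfrac{1}{n-1}\log\xi$ this becomes $D^{2}b_{\xi}=g-db_{\xi}\otimes db_{\xi}$, with $\left\vert\nabla b_{\xi}\right\vert\equiv1$.

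It remains to deduce that $\widetilde{M}$ is $\mathbb{H}^{n}$. Applying the Li-Wang splitting lemma to one such $\xi$ already gives $\widetilde{M}=(\mathbb{R}\times\Sigma,\,dt^{2}+e^{2t}g_{\Sigma})$ with $\Sigma$ complete, simply connected, and $Ric_{\Sigma}\geq0$; here the tangential sectional curvatures equal $e^{-2t}K_{\Sigma}-1$, so the goal is exactly to force $K_{\Sigma}\equiv0$. I would argue through the functions $b_{\xi}$ directly: $D^{2}b_{\xi}=g-db_{\xi}\otimes db_{\xi}$ says the integral curves of $\nabla b_{\xi}$ are unit-speed geodesics whose orthogonal level hypersurfaces have shape operator the identity, so the Riccati equation along these geodesics gives $R(X,\nabla b_{\xi})\nabla b_{\xi}=-X$ for all $X\perp\nabla b_{\xi}$; that is, every plane containing $\nabla b_{\xi}$ has sectional curvature $-1$. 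Fixing $p$ and letting $\xi$ vary, if the \emph{Gauss map} $\xi\mapsto\nabla b_{\xi}(p)$ sweeps out an open subset of the unit sphere in $T_{p}\widetilde{M}$, then the difference between the curvature tensor and that of constant curvature $-1$ is an algebraic curvature tensor whose sectional curvatures vanish on an open set of planes, hence vanishes; so $K\equiv-1$ at every $p$ and $\widetilde{M}=\mathbb{H}^{n}$.

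I expect the last point to be the main obstacle: establishing that the boundary gradient directions $\{\nabla b_{\xi}(p)\}$ exhaust an open subset of the unit sphere. Differentiating the normalization (\ref{defnu}) in $x$ gives $\int_{\partial^{\ast}\widetilde{M}}\xi(x)\,\nabla b_{\xi}(x)\,d\nu(\xi)=0$, so these unit directions cannot lie in any half-space; but genuine surjectivity onto the sphere needs more, presumably the full support of $\nu$ combined with a non-degeneracy of the Gauss map, or else an alternative route that analyzes all minimal harmonic functions on the warped product $\mathbb{R}\times_{e^{t}}\Sigma$ directly to show $\Sigma$ must be flat. Everything else---the inequality, the almost-everywhere-to-everywhere upgrade, and the umbilicity and Riccati computations---is routine once these structural facts are in place.
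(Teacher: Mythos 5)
Your inequality argument, the a.e.-to-everywhere upgrade, and the splitting $\widetilde{M}=(\mathbb{R}\times\Sigma,\,dt^{2}+e^{2t}g_{\Sigma})$ via the Li--Wang lemma all match the paper. But the step you yourself flag as the main obstacle is a genuine gap, and the route you sketch for it would not go through. The claim that the Gauss map $\xi\mapsto\nabla b_{\xi}(p)$ sweeps out an \emph{open} subset of the unit sphere has no support in what you have established: the identity $\int\xi(x)\nabla b_{\xi}(x)\,d\nu(\xi)=0$ only rules out containment in an open half-space, which is compatible with as few as two distinct directions, and the polynomial-identity argument for the curvature tensor genuinely needs an open (or at least Zariski-dense) set of planes. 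Worse, the auxiliary facts you invoke are not available in this generality: minimality of the $\Gamma$-action on the minimal Martin boundary, full support of $\nu$, and good topological behavior of the minimal boundary (e.g.\ that it is closed in $\mathcal{K}_{o}$, so that the pointwise identity $\left\vert\nabla\log\xi\right\vert\equiv n-1$ passes to limits) are theorems of Ancona-type theory that require Gromov hyperbolicity or pinched negative curvature --- precisely what this paper deliberately avoids assuming. Even granting full support and continuity, you would get a closed set of directions, not an open one.

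The paper's proof shows why none of this machinery is needed: it uses exactly \emph{two} boundary functions. The existence of a second point $\eta\in A\setminus\{\xi\}$ is immediate from the normalization (\ref{defnu}) (if $\nu$ were concentrated at $\xi$, then $\xi\equiv1$, contradicting minimality/nonconstancy). Writing $\psi=\eta^{-1/(n-1)}$, the Hessian identity for $\eta$ gives $D^{2}\psi=\psi g$ and $\left\vert\nabla\psi\right\vert=\psi$ on all of $\widetilde{M}$; restricting to the umbilic slice $\Sigma=\{t=0\}$ produces the overdetermined system (\ref{dit}) on $\Sigma$ with $\lambda=\partial_{t}\log\psi$, and one checks $u=\psi|_{\Sigma}$ is nonconstant (constancy would force $\eta=\xi$ or $\eta=1/\xi$, the latter not harmonic). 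The real work is then Theorem \ref{Rn}, an Obata-type rigidity statement: one shows $\mu=u(1+\lambda)$ is constant, that regular level sets of $u$ are compact via Bonnet--Myers, that $u=\frac{\mu}{2}(1+r^{2})$, hence $D^{2}r^{2}=2g$ and $\Sigma$ is flat $\mathbb{R}^{n-1}$. Your closing sentence (``analyze all minimal harmonic functions on the warped product directly'') gestures toward this, but the decisive ideas --- that a single extra boundary function suffices, and the rigidity theorem on $\Sigma$ that converts it into flatness --- are exactly what your proposal is missing.
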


We first prove the following theorem on manifolds with $Ric\geq0$ which may be
of independent interest.

\begin{theorem}
\label{Rn}Let $\left(  \Sigma^{n-1},g\right)  $ be a simply connected complete
Riemannian manifold with $Ric\geq0,n\geq3$. Suppose there is a smooth
positive, nonconstant function $u:\Sigma\rightarrow%
%TCIMACRO{\U{211d} }%
%BeginExpansion
\mathbb{R}
%EndExpansion
$ such that%
\begin{equation}
\left\{
\begin{array}
[c]{c}%
D^{2}u=u\left(  1+\lambda\right)  g,\\
\frac{\left\vert \nabla u\right\vert ^{2}}{u^{2}}=1-\lambda^{2}%
\end{array}
\right. \label{dit}%
\end{equation}

for some smooth function $\lambda$, then $\left(  \Sigma^{n-1},g\right)  $ is
isometric to $%
%TCIMACRO{\U{211d} }%
%BeginExpansion
\mathbb{R}
%EndExpansion
^{n-1}$ and on $%
%TCIMACRO{\U{211d} }%
%BeginExpansion
\mathbb{R}
%EndExpansion
^{n-1}\ $
\[
u=c\left(  1+\left\vert x-x_{0}\right\vert ^{2}\right)
\]
for some constant $c>0$ and $x_{0}\in%
%TCIMACRO{\U{211d} }%
%BeginExpansion
\mathbb{R}
%EndExpansion
^{n-1}$.

\begin{proof}
We divide the proof into several steps. Set $\mu=u\left(  1+\lambda\right)  $.

\textbf{Step 1: }Since $u$ is nonconstant and satisfies an elliptic equation,
the set $\{u\neq0\}$ is open and dense. This is also true of the set
$\{\lambda\neq0\}$. Indeed, if $\lambda=0$ on some open set $U$, then
$\left\vert \nabla u\right\vert ^{2}=u^{2},D^{2}u=ug$ on $U$. Using the
Bochner formula
\[
\frac{1}{2}\Delta\left\vert \nabla u\right\vert ^{2}=\left\vert D^{2}%
u\right\vert ^{2}+\left\langle \nabla u,\nabla\Delta u\right\rangle
+Ric\left(  \nabla u,\nabla u\right)
\]
we then easily get $\left(  n-2\right)  \left\vert \nabla u\right\vert
^{2}+Ric\left(  \nabla u,\nabla u\right)  =0$ by a simple computation. Since
$Ric\geq0$, we conclude that $u$ is constant on $U$. A contradiction.

\textbf{Step 2}: For any vector filed $X$ we have%
\begin{equation}
\frac{1}{2}X\left\vert \nabla u\right\vert ^{2}=\left\langle \nabla_{X}\nabla
u,\nabla u\right\rangle =\mu\left\langle X,\nabla u\right\rangle .\label{diff}%
\end{equation}

Taking $X$ to be $\nabla u$ and using the second equation of (\ref{dit})
yields%
\begin{align*}
\mu\left\vert \nabla u\right\vert ^{2}  & =\frac{1}{2}\left\langle \nabla
u,\nabla\left\vert \nabla u\right\vert ^{2}\right\rangle \\
& =\frac{1}{2}\left\langle \nabla u,\nabla\left(  u^{2}\left(  1-\lambda
^{2}\right)  \right)  \right\rangle \\
& =\left(  1-\lambda^{2}\right)  u\left\vert \nabla u\right\vert ^{2}-\lambda
u^{2}\left\langle \nabla u,\nabla\lambda\right\rangle .
\end{align*}
Hence%
\[
\lambda\left\langle \nabla u,\nabla\mu\right\rangle =0.
\]

On the other hand again from (\ref{diff}) for any $X$ with $\left\langle
X,\nabla u\right\rangle =0$ we have $\left\langle X,\nabla\left\vert \nabla
u\right\vert ^{2}\right\rangle =0$ and hence $\left\langle X,\nabla
\mu\right\rangle =0$ as $\nabla\mu$ is a linear combination of $\nabla u$ and
$\nabla\left\vert \nabla u\right\vert ^{2}$. Therefore $\nabla\mu=0$ on the
set $\{\nabla u\neq0,\lambda\neq0\}$. Since this set is open and dense in
$\Sigma\,$, we conclude that $\mu$ is a positive constant.

\textbf{Step 3}: Since $\nabla_{X}\nabla u=\mu X$ and $\mu$ is constant, it is
easy to see
\begin{equation}
R\left(  X,Y,Z,\nabla u\right)  =0.\label{curv}%
\end{equation}

If $n=3$, then $\Sigma$ is flat and hence isometric to $%
%TCIMACRO{\U{211d} }%
%BeginExpansion
\mathbb{R}
%EndExpansion
^{2}$. In the remaining steps we assume $n>3$.

\textbf{Step 4}: We show that each regular level set of $u$ is compact. Let
$S=u^{-1}\left(  c\right)  $ with $c$ a regular value. The unit normal of $S$
is $\nu=\nabla u/\left\vert \nabla u\right\vert $ and its second fundamental
form is given by
\begin{align*}
\Pi\left(  X,Y\right)   & =\left\langle \nabla_{X}\nu,Y\right\rangle \\
& =\frac{D^{2}u\left(  X,Y\right)  }{\left\vert \nabla u\right\vert }\\
& =\frac{\mu}{\left\vert \nabla u\right\vert }\left\langle X,Y\right\rangle
\end{align*}
for $X,Y$ tangent to $S$. Similarly
\[
X\left\vert \nabla u\right\vert =\frac{D^{2}u\left(  X,\nabla u\right)
}{\left\vert \nabla u\right\vert }=\frac{\mu Xu}{\left\vert \nabla
u\right\vert }=0
\]
for $X$ tangent to $S$. As a result $\left\vert \nabla u\right\vert $ and
$a=\frac{\mu}{\left\vert \nabla u\right\vert }$ are positive constants along
$S $. We compute the intrinsic Ricci curvature of $S$%
\begin{align*}
Ric_{S}\left(  X,X\right)   & =Ric\left(  X,X\right)  -R\left(  X,\nu
,X,\nu\right)  +\left(  n-3\right)  a^{2}\left\vert X\right\vert ^{2}\\
& \geq\left(  n-3\right)  a^{2}\left\vert X\right\vert ^{2}.
\end{align*}
It follows that $S$ is compact by Bonnet-Myers theorem. Since $\Sigma$ is
simply connected, each connected component of $S$ separates $\Sigma$ into two components.

\textbf{Step 5}: The first equation of (\ref{dit}) implies $D^{2}u>0$, i.e.
$u$ is convex. Then it is easy to see that $S$ is connected and $\left\{
u\leq c\right\}  $ is the inner component of $\Sigma-S$ and hence compact. In
other words $u$ is proper. Let $p$ be a point where $u$ achieves its minimum
$\mu/2$. For $X\in\mathcal{S}_{p}M$ let $\gamma\left(  t\right)  $ be the
geodesic with $\overset{\cdot}{\gamma}\left(  0\right)  =X$. Then $f\left(
t\right)  =u\circ\gamma\left(  t\right)  $ satisfies
\[
f^{\prime\prime}\left(  t\right)  =\mu,f\left(  0\right)  =\mu/2,f^{\prime
}\left(  0\right)  =0.
\]
Hence $f\left(  t\right)  =\mu\left(  1+t^{2}\right)  $. In other words%
\[
u\left(  \exp_{p}rX\right)  =\frac{\mu}{2}\left(  1+r^{2}\right)  .
\]
In geodesic polar coordinates $r$ is the distance function to $p$. The first
equation of (\ref{dit}) then simply means
\[
D^{2}r^{2}=2g
\]
at least within the cut locus. It is then easy to show that $\left(
\Sigma^{n-1},g\right)  $ is flat and hence isometric to $%
%TCIMACRO{\U{211d} }%
%BeginExpansion
\mathbb{R}
%EndExpansion
^{n-1}$.
\end{proof}
\end{theorem}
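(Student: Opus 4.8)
The plan is to exploit the fact that the first equation of (\ref{dit}) makes $u$ a \emph{concircular} function: its Hessian is pointwise proportional to the metric, $D^{2}u=\mu g$ with $\mu=u(1+\lambda)$, and such functions are extremely rigid. The whole argument is organized around first proving that $\mu$ is a positive constant, then showing that $u$ is a proper convex exhaustion whose level sets force the metric to be flat. To see $\mu$ is constant, I would contract $D^{2}u=\mu g$ with $\nabla u$ to get $\nabla|\nabla u|^{2}=2\mu\nabla u$, so $\nabla|\nabla u|^{2}$ is everywhere parallel to $\nabla u$. Substituting the second equation of (\ref{dit}), namely $|\nabla u|^{2}=u^{2}(1-\lambda^{2})$, expanding, and dividing by $2u$, one finds $\lambda(\lambda+1)\nabla u=-u\lambda\nabla\lambda$. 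Multiplying the identity $\nabla\mu=(1+\lambda)\nabla u+u\nabla\lambda$ by $\lambda$ then gives $\lambda\nabla\mu=0$, hence $\nabla\mu=0$ on the open dense set $\{\lambda\neq0\}$, so $\mu$ is a global constant by continuity. Since $-1\le\lambda\le1$ (from $|\nabla u|^{2}\ge0$) and $u>0$ we get $\mu\ge0$, and $\mu>0$ because $u$ is nonconstant.

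Next I would establish that $u$ is proper with a global minimum. Because $\mu$ is constant, one more covariant differentiation of $\nabla_{X}\nabla u=\mu X$ together with commuting covariant derivatives yields $R(X,Y,Z,\nabla u)=0$; in particular every plane containing $\nabla u$ is flat. When $n=3$ this already forces the Gaussian curvature to vanish on the dense set $\{\nabla u\neq0\}$, so $\Sigma$ is flat outright. For $n>3$ I would study a regular level set $S=u^{-1}(c)$: it is totally umbilic with second fundamental form $\frac{\mu}{|\nabla u|}g$, and $|\nabla u|$ is constant along $S$. Feeding the umbilicity, the identity $R(X,\nu,X,\nu)=0$ (since $\nu\parallel\nabla u$), and $Ric\ge0$ into the Gauss equation gives $Ric_{S}\ge(n-3)(\mu/|\nabla u|)^{2}$, a positive constant, so $S$ is compact by Bonnet--Myers. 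Since $D^{2}u=\mu g>0$, $u$ is strictly convex, and combined with simple connectedness this shows each level set is connected and each sublevel set $\{u\le c\}$ is compact; thus $u$ is proper and attains its minimum at some $p$, where $\nabla u=0$ forces $\lambda(p)=1$ and $u(p)=\mu/2$.

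Finally I would run geodesics from $p$. For a unit vector $X$ the function $f(t)=u(\exp_{p}tX)$ solves $f''=\mu$, $f(0)=\mu/2$, $f'(0)=0$, so $u(\exp_{p}rX)=\frac{\mu}{2}(1+r^{2})$ and therefore $D^{2}(r^{2})=2g$ inside the cut locus, where $r=d(p,\cdot)$. Writing $g=dr^{2}+g_{r}$ in geodesic polar coordinates and using $D^{2}r=\frac12\partial_{r}g_{r}$ on the geodesic spheres, the relation $D^{2}(r^{2})=2g$ reduces on the angular part to the ODE $\partial_{r}g_{r}=\frac{2}{r}g_{r}$, whose solution compatible with the Euclidean behavior as $r\to0$ is $g_{r}=r^{2}g_{S^{n-2}}$. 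Hence $g=dr^{2}+r^{2}g_{S^{n-2}}$ is flat within the cut locus; since the cut locus has measure zero and curvature is continuous, $\Sigma$ is flat everywhere, and being complete and simply connected it is isometric to $\mathbb{R}^{n-1}$ by Killing--Hopf. On $\mathbb{R}^{n-1}$ the formula $u=\frac{\mu}{2}(1+|x-p|^{2})$ then holds globally, giving the claimed form with $c=\mu/2$ and $x_{0}=p$.

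I expect the main obstacle to be the properness step for $n>3$: converting the purely pointwise information ($\mu$ constant, umbilic level sets, and $R(X,Y,Z,\nabla u)=0$) into global compactness of the sublevel sets. This is precisely where $Ric\ge0$ is indispensable, through the positive lower bound on $Ric_{S}$ and Bonnet--Myers, and where simple connectedness is needed to guarantee that the level sets are connected and bound compact inner regions; once properness is in hand, the geodesic computation and the final flatness argument are routine.
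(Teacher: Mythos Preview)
Your proposal is correct and follows essentially the same route as the paper: show $\mu=u(1+\lambda)$ is a positive constant, deduce $R(X,Y,Z,\nabla u)=0$, use the Gauss equation and Bonnet--Myers on level sets to obtain properness, and finish with the geodesic ODE and $D^{2}(r^{2})=2g$ to conclude flatness. Your derivation of the vector identity $\lambda\nabla\mu=0$ in one stroke is in fact slightly cleaner than the paper's Step~2, which splits into the $\nabla u$-direction (obtaining $\lambda\langle\nabla u,\nabla\mu\rangle=0$) and the orthogonal directions separately.

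There is one genuine omission: you invoke ``the open dense set $\{\lambda\neq0\}$'' without justification, and this density does not follow formally from the system~(\ref{dit}). The paper spends its Step~1 on exactly this point: if $\lambda\equiv0$ on an open set $U$, then $|\nabla u|^{2}=u^{2}$ and $D^{2}u=ug$ there, and the Bochner formula yields $(n-2)|\nabla u|^{2}+Ric(\nabla u,\nabla u)=0$; with $Ric\ge0$ and $n\ge3$ this forces $\nabla u=0$ on $U$, contradicting $|\nabla u|=u>0$. This is where the hypotheses $Ric\ge0$ and $n\ge3$ first enter, so the step is not merely cosmetic. Once you insert this argument, your proof is complete and matches the paper's.
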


\begin{proof}
[Proof of Theorem \ref{beta}]By Lemma \ref{ge} we have for any $\xi\in
\partial^{\ast}$ $\widetilde{M}$%
\[
\left\vert \nabla\log\xi\left(  x\right)  \right\vert \leq n-1.
\]
Hence, using Proposition \ref{beta2}%
\begin{align*}
\beta\left(  \widetilde{M}\right)   & =\int_{M}\left(  \int_{\partial^{\ast
}\widetilde{M}}\xi\left(  x\right)  \left\vert \nabla\log\xi\left(  x\right)
\right\vert ^{2}d\nu\left(  \xi\right)  \right)  dm\left(  x\right) \\
& \leq\left(  n-1\right)  ^{2}\int_{M}\left(  \int_{\partial^{\ast}%
\widetilde{M}}\xi\left(  x\right)  d\nu\left(  \xi\right)  \right)  dm\left(
x\right) \\
& =\left(  n-1\right)  ^{2}.
\end{align*}
If $\beta\left(  \widetilde{M}\right)  =\left(  n-1\right)  ^{2}$, then there
exists $A\subset\partial^{\ast}$ $\widetilde{M}$ with $\nu\left(
\partial^{\ast}\widetilde{M}\backslash A\right)  =0$ such that for any $\xi\in
A$
\begin{equation}
\left\vert \nabla\log\xi\left(  x\right)  \right\vert \equiv n-1.\label{ae}%
\end{equation}

Let $\xi\in A$ be such a point. By Lemma we have $\widetilde{M}=%
%TCIMACRO{\U{211d} }%
%BeginExpansion
\mathbb{R}
%EndExpansion
\times\Sigma^{n-1}$ with $g=dt^{2}+e^{2t}g_{\Sigma},\xi=\exp\left[  -\left(
n-1\right)  t\right]  $, where $\left(  \Sigma,g_{\Sigma}\right)  $ is a
complete Riemannian manifold with $Ric\geq0$. Notice that $o\in\{0\}\times
\Sigma$. Moreover $\Sigma$ is simply connected as $\widetilde{M}$ is. If
$n=2$, we are done. From then on we assume $n\geq3$. It is clear that
$A\backslash\{\xi\}$ is not empty by (\ref{defnu}). Let $\eta\in
A\backslash\{\xi\}$. We know that $\phi=\log\eta$ satisfies%
\begin{align*}
\left\vert \nabla\phi\right\vert  & =\left(  n-1\right)  ,\Delta\phi=-\left(
n-1\right)  ^{2},\\
D^{2}\phi & =-\left(  n-1\right)  \left[  g-\frac{1}{\left(  n-1\right)  ^{2}%
}d\phi\otimes d\phi\right]  .
\end{align*}
Let $\psi=\exp\left(  -\frac{\phi}{n-1}\right)  $. Then a simple computation
shows%
\begin{align*}
\frac{\left\vert \nabla\psi\right\vert ^{2}}{\psi^{2}}  & =1,\\
D^{2}\psi & =\psi g.
\end{align*}
Notice that $\Sigma$, view as the $t=0$ slice in $\widetilde{M}=%
%TCIMACRO{\U{211d} }%
%BeginExpansion
\mathbb{R}
%EndExpansion
\times\Sigma^{n-1}$ is umbilic in the sense that the second fundamental form
w.r.t. the unit normal $\frac{\partial}{\partial t}$ equals the metric $h$. As
a result $u=\psi|_{\Sigma}$, the restriction of $\psi$ on $\Sigma$ satisfies
the following equations%
\begin{align*}
\frac{\left\vert \nabla u\right\vert ^{2}}{u^{2}}  & =1-\lambda^{2},\\
D^{2}u  & =u\left(  1+\lambda\right)  g_{0},
\end{align*}
where $\lambda=\frac{\partial\log\psi}{\partial t}$ along $\Sigma$. We claim
that $u$ is not constant on $\Sigma$. If this is NOT true, then $\psi\left(
0,x\right)  \equiv1$ as $o\in\{0\}\times\Sigma$. Then either $\nabla
\psi\left(  0,x\right)  =\frac{\partial}{\partial t}$ or $\nabla\psi\left(
0,x\right)  =-\frac{\partial}{\partial t}$ along $\Sigma$. In the first case
$\psi\left(  t,x\right)  $ satisfies%
\[
\frac{\partial^{2}\psi}{\partial t^{2}}\left(  t,x\right)  =\psi\left(
t,x\right)  ,\psi\left(  0,x\right)  \equiv1,\frac{\partial\psi}{\partial
t}\left(  0,x\right)  =1
\]
and hence $\psi\left(  t,x\right)  =e^{t}$. This then implies that $\eta
=\exp\left[  -\left(  n-1\right)  t\right]  =\xi$, a contradiction. In the
second case we get $\psi\left(  t,x\right)  =e^{-t}$ and $\eta=\exp\left[
\left(  n-1\right)  t\right]  =\frac{1}{\xi}$. But this is not harmonic as it
is easy to check: $\Delta\eta=2\frac{\left\vert \nabla\xi\right\vert ^{2}}%
{\xi^{3}}=\frac{2\left(  n-1\right)  ^{2}}{\xi}$.

Since $u$ is not constant on $\Sigma$, by Theorem \ref{Rn} $\left(
\Sigma,g_{\Sigma}\right)  $ is isometric to $%
%TCIMACRO{\U{211d} }%
%BeginExpansion
\mathbb{R}
%EndExpansion
^{n-1}$. Therefore $\widetilde{M}$ is isometric to $\mathbb{H}^{n}$.
\end{proof}

\begin{remark}
For the hyperbolic space $\mathbb{H}^{n}$ the Martin boundary is the same as
the ideal boundary. In the ball model $\mathbb{H}^{n}$ is simply the unit ball
$B^{n}$ in $%
%TCIMACRO{\U{211d} }%
%BeginExpansion
\mathbb{R}
%EndExpansion
^{n}$ with the metric $g=\frac{4}{\left(  1-\left\vert x\right\vert
^{2}\right)  }dx^{2}$. We take the base point to be the origin. The ideal
boundary is the unit sphere $S^{n-1}$. For any $\xi\in S^{n-1}$ there
corresponds to a normalized minimal positive function%
\[
h_{\xi}\left(  x\right)  =\left(  \frac{1-\left\vert x\right\vert ^{2}%
}{\left\vert x-\xi\right\vert ^{2}}\right)  ^{n-1}.
\]
It is easy to verify that they all satisfy $\left\vert \nabla\log h_{\xi
}\left(  x\right)  \right\vert \equiv\left(  n-1\right)  $.
\end{remark}

\section{Proof of the main theorem}

\bigskip First we need another remarkable formula from \cite{K}.

\begin{theorem}%
\[
\beta\left(  \widetilde{M}\right)  =-\lim_{t\rightarrow\infty}\frac{1}{t}%
\int_{\widetilde{M}}p\left(  t,x,y\right)  \log p\left(  t,x,y\right)
dv\left(  y\right)  .
\]

\end{theorem}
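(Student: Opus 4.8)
The plan is to work throughout with the differential entropy
\[
F(t,x)=-\int_{\widetilde{M}}p(t,x,y)\log p(t,x,y)\,dv(y),
\]
so that the assertion is $F(t,x)/t\to\beta(\widetilde{M})$ for each fixed $x$; since the heat kernel, $\phi$, and the boundary data are all $\Gamma$-invariant and $M$ is compact, the limit will automatically be a constant independent of $x$. The strategy is to compare $F$ with the quantity $tu(t,x)=\int_{\widetilde{M}}\phi(x,y)p(t,x,y)\,dv(y)$ already at hand, whose growth rate I can control, and then to reduce the comparison to a statement about Fisher information by comparing time-derivatives (l'Hospital's rule).

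First I would record two derivative computations. Set $G(y)=\int_{\partial^{\ast}\widetilde{M}}\xi(y)|\nabla\log\xi(y)|^{2}d\nu(\xi)=\Delta_{y}\phi(x,y)$, which is $\Gamma$-invariant and, by Yau's estimate (Lemma \ref{ge}) together with (\ref{defnu}), satisfies $0\le G\le(n-1)^{2}$. Using $\partial_{t}p=\Delta_{y}p$, the bounds $|\nabla_{y}\phi|\le Cd$, $|\Delta_{y}\phi|\le C$ justify integrating by parts to obtain
\[
\frac{d}{dt}\left(tu(t,x)\right)=\int_{\widetilde{M}}\phi\,\Delta_{y}p\,dv=\int_{\widetilde{M}}(\Delta_{y}\phi)\,p\,dv=(P_{t}G)(x),
\]
where $P_{t}$ is the heat semigroup on $\widetilde{M}$. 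Because $G$ descends to the compact quotient $M$, the long-time heat flow equidistributes it, $(P_{t}G)(x)\to\int_{M}G\,dm=\beta$ by Proposition \ref{beta2}; hence $u(t,x)\to\beta$ by l'Hospital. Similarly, from $\int_{\widetilde{M}}\Delta_{y}p\,dv=0$,
\[
F'(t,x)=-\int_{\widetilde{M}}\Delta_{y}p\,\log p\,dv=\int_{\widetilde{M}}\frac{|\nabla_{y}p|^{2}}{p}\,dv=\int_{\widetilde{M}}|\nabla_{y}\log p(t,x,y)|^{2}\,p(t,x,y)\,dv(y),
\]
the Fisher information of the density $p(t,x,\cdot)$.

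By l'Hospital once more, the whole theorem reduces to the single claim $F'(t,x)\to\beta$. The heuristic is that for large $t$ the mass of $p(t,x,\cdot)$ is ``emitted from'' the exit point $\omega_{\infty}\in\partial^{\ast}\widetilde{M}$ of the Brownian path, so that its logarithmic $y$-gradient aligns with that of the associated minimal function: $\nabla_{y}\log p(t,x,y)\approx\nabla_{y}\log\omega_{\infty}(y)$. Granting this and using that the joint law of $(\omega_{t},\omega_{\infty})$ is $p(t,x,y)\xi(y)\,dv(y)\,d\nu(\xi)$, one computes
\[
F'(t,x)\approx\int_{\widetilde{M}\times\partial^{\ast}\widetilde{M}}|\nabla_{y}\log\xi(y)|^{2}\,\xi(y)\,p(t,x,y)\,dv(y)\,d\nu(\xi)=(P_{t}G)(x)\to\beta,
\]
which is exactly the desired limit. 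Equivalently, one can phrase everything through the Doob transforms $p^{\xi}(t,x,y)=\frac{\xi(y)}{\xi(x)}p(t,x,y)$: the exact identity $F(t,x)=tu(t,x)+\int_{\partial^{\ast}\widetilde{M}}\xi(x)\left(-\int p^{\xi}\log p^{\xi}\,dv\right)d\nu(\xi)$ holds, and the theorem becomes the statement that the conditioned diffusions have sublinear differential entropy (on $\mathbb{H}^{n}$ this term is of order $\log t$).

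The hard part is precisely this alignment $\nabla_{y}\log p(t,x,y)\to\nabla_{y}\log\omega_{\infty}(y)$ in $L^{2}(p)$, that is, the assertion that the logarithmic gradient of the long-time heat kernel converges to that of the Martin kernel of the escape point. Making it rigorous will require: (i) uniform gradient and Gaussian bounds for $p$ coming from the bounded geometry of $\widetilde{M}$ (Li--Yau and parabolic Harnack), so that $|\nabla_{y}\log p|$ stays controlled and the integrands are dominated; (ii) almost-sure convergence of the Brownian motion to a point of the minimal Martin boundary with hitting measure $\xi(x)\,d\nu$; and (iii) a Harnack-type comparison showing that along typical paths $p(t,x,\cdot)$ and the minimal function $\omega_{\infty}$ acquire asymptotically the same logarithmic derivative. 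This is the analytic core of Kaimanovich's theorem, and it is the global $\Gamma$-invariance together with the compactness of $M$ that collapse the limit to the single constant $\beta$.
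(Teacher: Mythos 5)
First, a point of comparison: the paper itself does not prove this theorem --- it is quoted from Kaimanovich \cite{K} without proof (``another remarkable formula from \cite{K}''), so there is no internal argument to measure you against; your proposal must stand on its own. The preparatory reductions you make are correct and well checked: the de Bruijn identity $F'(t,x)=\int_{\widetilde{M}}|\nabla_{y}\log p|^{2}\,p\,dv$; the computation $\frac{d}{dt}\bigl(tu(t,x)\bigr)=(P_{t}G)(x)$ with $G(y)=\int_{\partial^{\ast}\widetilde{M}}\xi(y)|\nabla\log\xi(y)|^{2}\,d\nu(\xi)$, which is $\Gamma$-invariant and bounded by $(n-1)^{2}$ via Lemma \ref{ge} and (\ref{defnu}); the equidistribution $(P_{t}G)(x)\to\int_{M}G\,dm=\beta$, valid because $G$ descends to the compact quotient so $P_{t}^{\widetilde{M}}G=P_{t}^{M}\bar{G}\circ\pi$; and the exact decomposition $F(t,x)=tu(t,x)+\int_{\partial^{\ast}\widetilde{M}}\xi(x)\bigl(-\int p^{\xi}\log p^{\xi}\,dv\bigr)\,d\nu(\xi)$, which indeed follows from $\int\xi(y)\,d\nu(\xi)=1$ and the definition of $\phi$.

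But the proof stops exactly where the theorem begins. Everything above reduces the statement to the single claim $F'(t,x)\to\beta$, equivalently that the averaged differential entropies of the Doob-conditioned diffusions $p^{\xi}$ grow sublinearly, equivalently the alignment $\nabla_{y}\log p(t,x,\cdot)\to\nabla_{y}\log\omega_{\infty}$ in $L^{2}(p)$ --- and for this you offer only a heuristic plus a three-item list of prerequisites. None of (i)--(iii) actually delivers the alignment: Li--Yau and Harnack bounds give domination and tightness, and almost-sure convergence to the Martin boundary identifies the candidate limit, but nothing in the list produces the quantitative comparison between $\log p(t,x,\cdot)$ and the Martin kernel of the escape point, which is the entire analytic content of the theorem. (Kaimanovich's own proof does not establish such a pointwise gradient alignment at all; it runs through entropy theory --- discretization of the Brownian motion to a random walk on $\Gamma$ and the Kaimanovich--Vershik/Derriennic conditional-entropy machinery relative to the exit measure --- and obtains the gradient formula of Proposition \ref{beta2} as a consequence of the entropy identity, the reverse of your logical order.) There is also a secondary soft spot: your appeal to l'Hospital presupposes that $\lim_{t\to\infty}F'(t,x)$ exists. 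Unlike $(P_{t}G)(x)$, the Fisher information $F'(t,x)$ is not the heat semigroup applied to a fixed $\Gamma$-invariant function, so no equidistribution argument applies to it; and monotonicity of Fisher information along the heat flow, which would give existence of the limit, fails to come for free here --- a Bakry--\'Emery computation under $Ric\geq-(n-1)$ only controls $e^{2(n-1)t}F'(t,x)$, in the wrong direction. So as written the proposal assumes, rather than proves, the theorem's core, and you candidly say as much; to complete it you would either have to carry out Kaimanovich's entropy-theoretic argument or supply a genuinely new proof of the sublinearity of the conditioned entropies.
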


We also need the following lemma from \cite{L1}.

\begin{lemma}
\label{Le}Let $\left(  M,g\right)  $ be a compact Riemannian manifold and
$\pi:\widetilde{M}\rightarrow M$ its universal covering. Then $\beta\left(
\widetilde{M}\right)  \geq4\lambda_{0}\left(  \widetilde{M}\right)  $.
\end{lemma}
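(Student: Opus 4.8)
The plan is to read $\beta$ through the heat-kernel entropy formula stated just above and to combine it with the classical identity expressing entropy dissipation as Fisher information. Fix the base point $x$ and set $q_t(y)=p(t,x,y)$. Since $\widetilde M$ has $Ric\geq-(n-1)$, it is stochastically complete, so $q_t$ is a probability density: $\int_{\widetilde M}q_t\,dv=1$ for all $t>0$. Writing
\[
H(t)=-\int_{\widetilde M}q_t\log q_t\,dv,
\]
the cited formula becomes $\beta(\widetilde M)=\lim_{t\to\infty}H(t)/t$. Thus it suffices to show $H(t)\geq 4\lambda_0 t+C$ for some constant $C$.

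First I would establish the dissipation identity
\[
\frac{d}{dt}H(t)=\int_{\widetilde M}\frac{|\nabla q_t|^2}{q_t}\,dv=:I(t).
\]
Formally, differentiating under the integral and using $\partial_t q_t=\Delta q_t$ gives $H'(t)=-\int(\Delta q_t)(\log q_t+1)\,dv$; the constant term drops because $\int\Delta q_t\,dv=\frac{d}{dt}\int q_t\,dv=0$ by stochastic completeness, and integrating by parts turns the remaining term into $\int\langle\nabla q_t,\nabla\log q_t\rangle\,dv=I(t)$. The lower bound on $I(t)$ then comes from the square-root substitution: with $f=q_t^{1/2}$ one has $\int f^2=\int q_t=1$ and $|\nabla f|^2=|\nabla q_t|^2/(4q_t)$, hence $I(t)=4\int_{\widetilde M}|\nabla f|^2\,dv$. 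Because $\widetilde M$ is complete and $q_t$ enjoys Gaussian decay, $f\in W^{1,2}(\widetilde M)$, and a cutoff argument (using $|\nabla\chi_R|\leq C/R$ together with $\int f^2<\infty$) upgrades the variational characterization of $\lambda_0$ from $C_c^\infty$ to $W^{1,2}$, giving $\int|\nabla f|^2\geq\lambda_0\int f^2$. Therefore $I(t)\geq 4\lambda_0\int f^2=4\lambda_0$ for every $t$, and integrating the dissipation identity yields $H(t_2)-H(t_1)\geq 4\lambda_0(t_2-t_1)$. Dividing by $t_2$ and letting $t_2\to\infty$ gives $\beta(\widetilde M)\geq 4\lambda_0(\widetilde M)$, as desired. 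As a consistency check, on $\mathbb{H}^n$ both sides equal $(n-1)^2$, matching Theorem \ref{beta}, so the inequality is sharp.

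The main obstacle is the analytic justification of the dissipation identity on the noncompact complete manifold $\widetilde M$: differentiating $H$ under the integral sign, verifying $I(t)<\infty$, and discarding the boundary terms in the integration by parts all require quantitative control of $q_t$, $\nabla q_t$, and $\Delta q_t$ at infinity. Under $Ric\geq-(n-1)$ with the bounded geometry inherited from covering a compact manifold, these follow from standard Li--Yau-type heat-kernel upper bounds and gradient estimates, which guarantee exponential-in-distance decay of $q_t$ and its derivatives and hence the finiteness and vanishing-at-infinity needed to make each step rigorous. Controlling the integrability of $q_t\log q_t$, both near the integrable diagonal singularity and at infinity, is the only genuinely delicate point.
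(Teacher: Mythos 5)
Your proposal is correct and is essentially the paper's own argument: both convert entropy growth into time-integrated Fisher information via integration by parts, bound it below by $4\lambda_{0}$ using the substitution $f=\sqrt{p(t,x,\cdot)}$ together with the variational characterization of $\lambda_{0}$ and stochastic completeness, and then divide by $t$ and let $t\rightarrow\infty$. The only cosmetic difference is that the paper works with the identity integrated in time from $\varepsilon$ to $t$ rather than stating the pointwise dissipation identity $H^{\prime}(t)=I(t)$ and integrating it afterwards, which sidesteps some of the justification of differentiation under the integral that you flag as the delicate point.
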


\begin{proof}
Since the proof is short and instructive, we present it for the convenience of
the reader. For $\varepsilon>0$%
\begin{align*}
& -\frac{1}{t}\int_{\widetilde{M}}p\left(  t,x,y\right)  \log p\left(
t,x,y\right)  dv\left(  y\right)  +\frac{1}{t}\int_{\widetilde{M}}p\left(
\varepsilon,x,y\right)  \log p\left(  \varepsilon,x,y\right)  dv\left(
y\right) \\
& =-\frac{1}{t}\int_{\widetilde{M}}\int_{\varepsilon}^{t}\left(  \log p\left(
s,x,y\right)  +1\right)  \frac{\partial p}{\partial s}\left(  s,x,y\right)
dsdv\left(  y\right) \\
& =-\frac{1}{t}\int_{\widetilde{M}}\int_{\varepsilon}^{t}\left(  \log p\left(
s,x,y\right)  +1\right)  \Delta_{y}p\left(  s,x,y\right)  dsdv\left(  y\right)
\\
& =-\frac{1}{t}\int_{\widetilde{M}}\int_{\varepsilon}^{t}\Delta_{y}\left(
\log p\left(  s,x,y\right)  +1\right)  p\left(  s,x,y\right)  dsdv\left(
y\right) \\
& =\frac{1}{t}\int_{\varepsilon}^{t}\int_{\widetilde{M}}\frac{\left\vert
\nabla_{y}p\left(  s,x,y\right)  \right\vert ^{2}}{p\left(  s,x,y\right)
}dsdv\left(  y\right)  ds\\
& =\frac{4}{t}\int_{\varepsilon}^{t}\int_{\widetilde{M}}\left\vert \nabla
_{y}\sqrt{p\left(  s,x,y\right)  }\right\vert dv\left(  y\right)  ds\\
& \geq\frac{4}{t}\lambda_{0}\left(  \widetilde{M}\right)  \int_{\varepsilon
}^{t}\int_{\widetilde{M}}p\left(  s,x,y\right)  dv\left(  y\right)  ds\\
& =\frac{4}{t}\lambda_{0}\left(  \widetilde{M}\right)  \left(  t-\varepsilon
\right)  .
\end{align*}
In the last step we use the fact that $\widetilde{M}$ is stochastically
complete, i.e.%
\[
\int_{\widetilde{M}}p\left(  s,x,y\right)  dv\left(  y\right)  =1.
\]
Letting $t\rightarrow\infty$ yields $\beta\left(  \widetilde{M}\right)
\geq4\lambda_{0}\left(  \widetilde{M}\right)  $.
\end{proof}

\bigskip We now prove our main theorem

\begin{theorem}
Let $\left(  M^{n},g\right)  $ be a compact Riemannian manifold with
$Ric\geq-\left(  n-1\right)  $ and $\pi:\widetilde{M}\rightarrow M$ its
universal covering. Then

\begin{enumerate}
\item $\lambda_{0}\left(  \widetilde{M}\right)  \leq\left(  n-1\right)
^{2}/4$,

\item If equality holds, then $\widetilde{M}$ is isometric to the hyperbolic
space $\mathbb{H}^{n}$.
\end{enumerate}
\end{theorem}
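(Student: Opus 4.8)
The plan is to deduce this statement directly by combining the two results already established in the paper, with the Kaimanovich entropy $\beta(\widetilde{M})$ serving as the bridge between the spectral quantity $\lambda_{0}$ and the geometric constant $(n-1)^{2}$. From Lemma \ref{Le} we have the lower bound $\beta(\widetilde{M})\geq 4\lambda_{0}(\widetilde{M})$, obtained from the heat-kernel entropy computation, while the sharp estimate of Theorem \ref{beta} supplies the upper bound $\beta(\widetilde{M})\leq (n-1)^{2}$, coming ultimately from Yau's gradient estimate (Lemma \ref{ge}). Concatenating the two inequalities gives
\[
4\lambda_{0}(\widetilde{M})\leq \beta(\widetilde{M})\leq (n-1)^{2}.
\]
Dividing by $4$ yields at once part (1), namely $\lambda_{0}(\widetilde{M})\leq (n-1)^{2}/4$.

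For part (2) I would argue by saturation. Assume the extremal case $\lambda_{0}(\widetilde{M})=(n-1)^{2}/4$. Then $4\lambda_{0}(\widetilde{M})=(n-1)^{2}$, so the left and right ends of the displayed chain coincide. This forces the intermediate term to equal both, i.e.
\[
\beta(\widetilde{M})=(n-1)^{2}.
\]
At this point I invoke the rigidity clause of Theorem \ref{beta}: equality $\beta(\widetilde{M})=(n-1)^{2}$ holds if and only if $\widetilde{M}$ is isometric to $\mathbb{H}^{n}$. Hence the extremal hypothesis on $\lambda_{0}$ yields that $\widetilde{M}$ is isometric to the hyperbolic space, which is the conclusion of part (2) and of the Main Theorem.

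Since all the analytic and geometric substance has already been absorbed into Lemma \ref{Le} and Theorem \ref{beta}, there is no genuine remaining obstacle; the only delicate logical point is to check that the extremal value of $\lambda_{0}$ saturates the \emph{upper} bound on $\beta$ rather than merely its lower bound, which is exactly what the pinching $4\lambda_{0}=(n-1)^{2}$ guarantees, thereby handing control to the rigidity statement. It is worth emphasizing that the compactness of $M$ (equivalently, that $\widetilde{M}$ is a regular covering of a compact manifold) enters precisely through Lemma \ref{Le}, where stochastic completeness and the $\Gamma$-invariance of the heat kernel are used; without it the lower bound $\beta\geq 4\lambda_{0}$ need not hold, in line with the remark in the introduction that the covering hypothesis is essential.
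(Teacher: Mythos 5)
Your proof is correct, and for part (2) it coincides exactly with the paper's argument: the paper also deduces the rigidity statement by combining Lemma \ref{Le} and Theorem \ref{beta}. Where you differ is part (1). The paper does not prove the inequality $\lambda_{0}(\widetilde{M})\leq(n-1)^{2}/4$ via the entropy at all; it cites Cheng's eigenvalue comparison theorem (or the inequality $\lambda_{0}\leq h^{2}/4$ with $h$ the volume entropy), which holds for \emph{any} complete manifold with $Ric\geq-(n-1)$, no compact quotient required. Your derivation via the sandwich $4\lambda_{0}(\widetilde{M})\leq\beta(\widetilde{M})\leq(n-1)^{2}$ is a legitimate alternative in the setting at hand and has the virtue of being entirely self-contained within the paper, but it is strictly less general: the entropy $\beta$ is only defined for regular coverings of compact manifolds, so your route cannot recover the classical bound in the noncompact setting, whereas the paper's citation does. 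One small correction to your closing remark: you say compactness enters \emph{precisely} through Lemma \ref{Le}, but it is not that the inequality $\beta\geq4\lambda_{0}$ ``need not hold'' otherwise --- rather, $\beta$ itself is not defined without a compact quotient, and compactness is equally essential in Theorem \ref{beta} (the function $u(\tau,x)$ must descend to $M$ and be integrated against the normalized volume form $dm$ on the compact base). This does not affect the validity of your proof, which is sound as written.
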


\begin{proof}
The first part is well known and follows from a theorem of Cheng \cite{C} or
the inequality $\lambda_{0}\left(  \widetilde{M}\right)  \leq h^{2}/4$, where
$h$ is the volume entropy. The second part clearly follows from Lemma \ref{Le}
and Theorem \ref{beta}.
\end{proof}

\bigskip

\bigskip

\end{document}